\algnewcommand\To{\textbf{to}}
\renewcommand{\Comment}[2][.5\linewidth]{%
  \leavevmode\hfill\makebox[#1][l]{//~#2}}
\newtheorem{theorem}{Theorem}
\newtheorem{definition}[theorem]{Definition}
\newtheorem{lemma}[theorem]{Lemma}
\newtheorem{assumption}{Assumption}
\Crefname{ALC@unique}{Line}{Lines}
\crefname{assumption}{Assumption}{Assumptions}
\renewcommand{\b}[1]{{\boldsymbol{#1}}}
\renewcommand{\c}[1]{{\mathcal{#1}}}
\newcommand{\bc}[1]{{\b{\c{#1}}}}
\newcommand{\RR}{\mathbb{R}}
\newcommand{\PP}{\mathbb{P}}
\renewcommand{\SS}{\mathbb{S}}
\newcommand{\NN}{\mathbb{N}}
\newcommand{\MG}{M\!G}
\DeclareMathOperator{\diag}{diag}
\DeclareMathOperator{\blockdiag}{blockdiag}
\DeclareMathOperator{\Exp}{\mathbb{E}}
\DeclareMathOperator{\Var}{\mathbb{V}}
\DeclareMathOperator{\Cov}{\mathrm{Cov}}
\providecommand{\norm}[2]{\left\lVert#1\right\rVert_{#2}}
\newcommand{\pc}[1]{\parbox[c]{1cm}{\includegraphics[width=1cm]{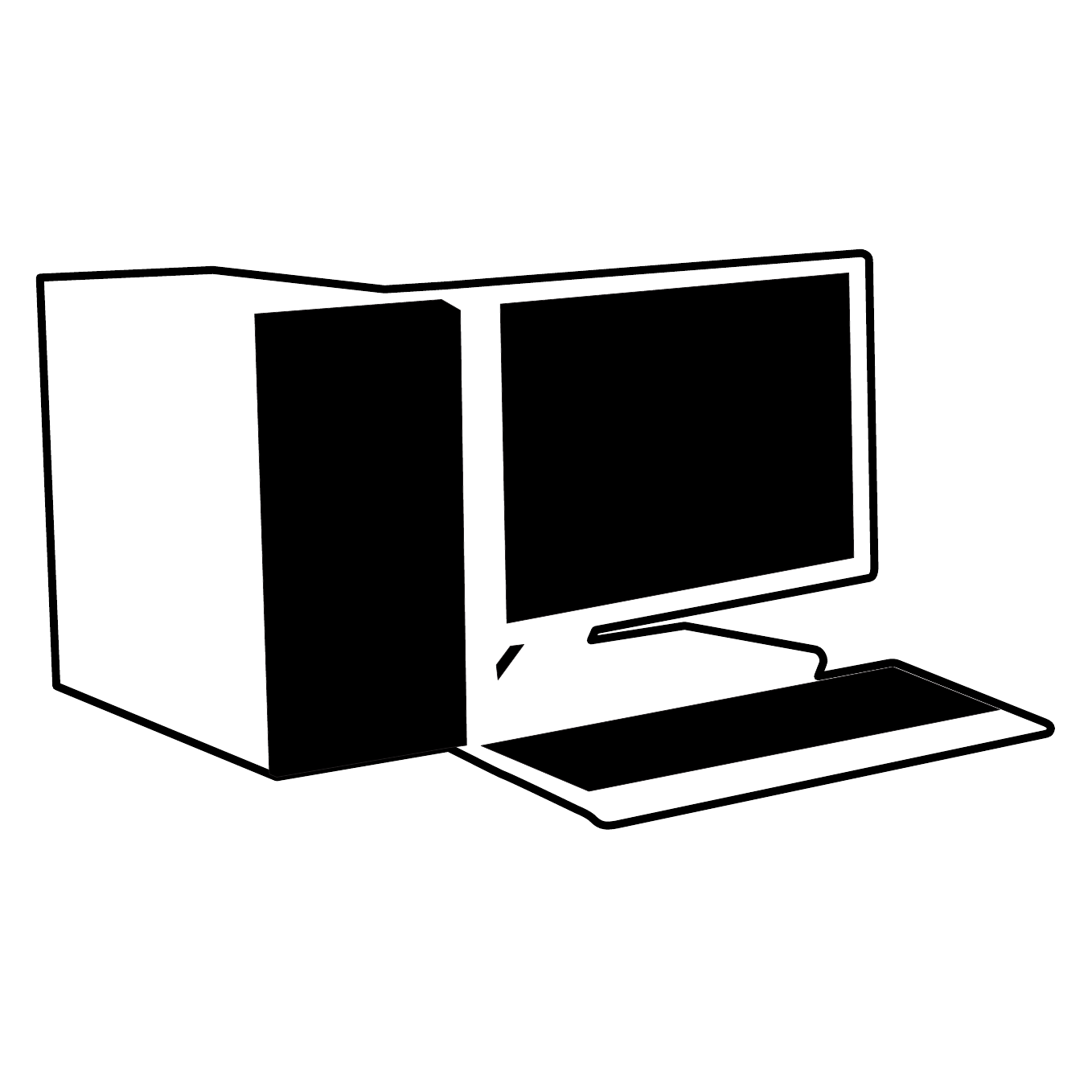}}\textsubscript{#1}}
\title[Is the multigrid method fault tolerant? The Two-Grid Case.]{Is the multigrid method fault tolerant? \\ The Two-Grid Case.}
\thanks{The effort of MA was partially supported by SIRIUS award from DoE.}
\author{Mark Ainsworth}
\address{Division of Applied Mathematics, Brown University, 182 George St, Providence, RI 02912, USA.\newline
  Computer Science and Mathematics Division, Oak Ridge National Laboratory, Oak Ridge, TN 37831, USA.}
\email{Mark\_Ainsworth@Brown.edu}
\author{Christian Glusa}
\address{Division of Applied Mathematics, Brown University, 182 George St, Providence, RI 02912, USA.}
\email{Christian\_Glusa@Brown.edu}
\begin{document}

\dedicatory{This paper is dedicated to Professor Ivo Babu\v{s}ka on the occasion of his \nth{90} birthday}

\begin{abstract}
  The predicted reduced resiliency of next-generation high performance
  computers means that it will become necessary to take into account
  the effects of randomly occurring faults on numerical methods.
  Further, in the event of a hard fault occurring, a decision has to
  be made as to what remedial action should be taken in order to
  resume the execution of the algorithm. The action that is chosen can
  have a dramatic effect on the performance and characteristics of the
  scheme. Ideally, the resulting algorithm should be subjected to the
  same kind of mathematical analysis that was applied to the original,
  deterministic variant.

  The purpose of this work is to provide an analysis of the behaviour
  of the multigrid algorithm in the presence of faults. Multigrid is
  arguably the method of choice for the solution of large\--scale linear
  algebra problems arising from discretization of partial differential
  equations and it is of considerable importance to anticipate its
  behaviour on an exascale machine. The analysis of resilience of
  algorithms is in its infancy and the current work is perhaps the
  first to provide a mathematical model for faults and analyse the
  behaviour of a state-of-the-art algorithm under the model. It is
  shown that the Two Grid Method fails to be resilient to faults.
  Attention is then turned to identifying the minimal necessary
  remedial action required to restore the rate of convergence to that
  enjoyed by the ideal fault-free method.
\end{abstract}

\keywords{Multigrid, Fault Tolerance, Resilience, Random Matrices, Convergence Analysis}
\subjclass[2010]{65F10, 65N22, 65N55, 68M15}

\maketitle

\section{Introduction}

President Obama's executive order in the summer of 2015 establishing
the National Strategic Computing Initiative\footnote{Creating a
  National Strategic Computing Initiative, Executive Order No. 13702
  of July 29th, 2015, Federal Register} committed the US to the
development of a \emph{capable} exascale computing system. Given that
the performance of the current number one machine Tianhe-2 is roughly
one thirtieth of that of an exascale system, it is easy to
underestimate the challenge posed by this task. One way to envisage
the scale of the undertaking is that the combined processing power of
the \emph{entire} TOP 500 list is less than half of one exaflop
(\(10^{18}\) floating point operations per second).

It is widely accepted that an exascale machine should respect a 20MW
power envelope. Tianhe-2 already consumes 18MW of power, and if it
were possible to simply upscale to exascale using the current
technology, would require around 540MW or roughly the same amount of
energy required to power half a million homes. In order to meet the
power envelope, and other requirements, one is required to change the
physical operating limits of the machine through such mechanisms as
lower voltage logic thresholds, reduced cell capacitance and through
minimisation of data movement. All of these of factors contribute to a
lower overall reliability of the machine in terms of random
bit-flipping and corruption of logic states from extraneous sources
such as cosmic rays and, failures of individual components. The issue
is further exacerbated by the additional numbers of hardware
components required for an exascale system
\cite{CappelloGeistEtAl2009_TowardExascaleResilience,
  CappelloGeistEtAl2014_TowardExascaleResilience,
  Cappello2009_FaultTolerancePetascaleExascaleSystems,
  DongarraHittingerEtAl2014_AppliedMathematicsResearchExascaleComputing}.

The taxonomy of faults and failures by Avi\v{z}ienis et
al.~\cite{AvizienisLaprieEtAl2004_BasicConceptsTaxonomyDependableSecureComputing}
distinguishes between \emph{hard faults} and \emph{soft faults}. Soft
faults correspond to corruption of instructions as well as the data
produced and used within the code but which allow the execution of the
code to proceed albeit in a corrupted state. An example of such a
fault would be flipping of individual bits in a floating point number.
Some bit flips in the exponent, sign or in the significant components
of the mantissa may result in a relatively large error that becomes
detectable. Hard faults might correspond to a message from one or more
compute nodes either being corrupted beyond all recognition (possibly
resulting in an exception being thrown), or lost altogether in the
event of a compute node failing completely. Hard faults result in the
interruption of the execution of the actual code unless remedial
action is taken. As such, hard faults constitute errors that are
readily \emph{detectable}. Although the likelihood $q$ of a hard fault
occurring can be assumed small, it is not negligible. The fact that an
algorithm cannot continue after a hard fault without some kind of
remedial action means that such faults cannot simply be ignored in the
hope that the algorithm will recover naturally.

Most, if not all, existing algorithms were derived under the
assumption that such faults cannot occur and accordingly, no
indication is generally offered as to what might constitute an
appropriate course of action should a fault be detected in a
particular algorithm. Nevertheless, in the event of a hard fault
occurring, a decision has to be made as to what remedial action should
be taken in order to resume the execution of the algorithm. The action
that is chosen can have a dramatic effect on the performance and
characteristics of the algorithm and as such represents a vital part
of the algorithm. Ideally, the resulting algorithm should be subjected
to the same kind of mathematical analysis that was applied to the
original, deterministic, variant of the algorithm. Historical
considerations mean that such analysis is available for very few
methods.

At the present time, the basic approach to dealing with faults
consists of attempting to restore the algorithm to the state that
would have existed had no fault occurred. Two common ways in which
this might be accomplished include:
\emph{Checkpoint-restart}~\cite{HeraultRobert2015_FaultToleranceTechniquesHighPerformanceComputing,
  SnirWisniewskiEtAl2014_AddressingFailuresExascaleComputing} whereby
the state of the system is stored (or checkpointed) at pre-determined
intervals so that in the event of a fault occurring, the computation
can be restarted from the stored state; or, \emph{Process
  Replication}~\cite{HeraultRobert2015_FaultToleranceTechniquesHighPerformanceComputing,
  SnirWisniewskiEtAl2014_AddressingFailuresExascaleComputing}, whereby
each critical component of the overall process is replicated one or
more times so that in the event of a component being subject to a
fault, the true state can be restored by making reference to an
unaffected replica. Hybrid variants of these strategies can also be
contemplated. The drawbacks of such approaches are self-evident.

The purpose of this work is to provide an analysis of the behaviour of
the multigrid algorithm on a machine prone to faults. The multigrid
method is arguably the method of choice for the solution of
large-scale linear algebra problems arising from discretization of
partial differential equations. Consequently, it is of considerable
importance to anticipate its behaviour on an exascale machine. The
analysis of resilience of algorithms is in its infancy and the current
work is perhaps the first to provide a mathematical model for faults
and analyse the behaviour of a state-of-the-art algorithm under the
model.

This work is organised as follows. In the next section we present a
mathematical model for faults and a strategy for their mitigation. In
\Cref{sec:model-fault-prone} we will briefly introduce the multigrid
algorithm and its variant when faults are taken into account. The
random iteration matrix of the resulting \emph{Fault-Prone Multigrid
  Method} is determined and an appropriate metric for convergence is
discussed. The main results of this work concerning the rate of
convergence of the Fault-Prone Two Grid method is found in
\Cref{sec:summary-main-results}. \Cref{thm:perturbedTGconv}
demonstrates that the Two Grid Method is not fault resilient. The
minimal necessary remedial action is given by \Cref{thm:TGNoProlong}:
protecting the prolongation restores the rate of convergence of the
ideal fault-free method. Supporting numerical evidence is provided.

\subsection{Related Work}

Different techniques have been previously employed in order to achieve
fault resilience for iterative methods in general and multigrid in
particular.
Replication was used in
\cite{CuiXuEtAl2013_ErrorResilientRedundantSubspaceCorrectionMethod,
  CasasSupinskiEtAl2012_FaultResilienceAlgebraicMultiGridSolver}, and
checkpoint-restart in
\cite{CalhounOlsonEtAl2015_TowardsMoreFaultResilientMultigridSolver}.
Stoyanov and Webster
\cite{StoyanovWebster2015_NumericalAnalysisFixedPoint} proposed a
method based on selective reliability for fixed point methods.
Finally, Huber et al.
\cite{HuberGmeinerEtAl2015_ResilienceMultigridSoftwareAtExtremeScale}
proposed a recovery method to mitigate the effect of hard faults.

\section{Modelling Faults}

In order to take account of the effect of detectable faults on the
stability and convergence of a numerical algorithm, we develop a
simple probabilistic model for this behaviour and describe how it is
incorporated into the formulation of iterative algorithms. As a matter of
fact, it will transpire that our model for detectable faults will also
apply to the case of silent faults that are relatively small.

Our preferred approach to mitigating the effects of faults in
multigrid and related iterative algorithms is rather simple: \emph{the
  lost or corrupted result expected from a node is replaced by a
  zero}. That is to say, if a value $x\in\RR$ is prone to possible
hard faults, then we propose to continue the global computation using
$\tilde{x}\in\RR$ in place of $x$, where
\begin{align*}
  \tilde{x} & =
            \begin{cases}
              0          & \text{if a fault is detected}, \\
              x          & \text{otherwise}.
            \end{cases}
\end{align*}

This minimalist, \emph{laissez-faire}, approach has obvious
attractions in terms of restoring the system to a valid state without
the need to (i) halt the execution (other than to perhaps migrate data
to a substitute node in the event of a node failure); (ii) take or
read any data checkpoints; (iii) recompute any lost quantities; or,
(iv) compromise on resources through having to replicate processes.
However, the efficacy of \emph{laissez-faire} depends on the extent to
which the convergence characteristics of the resulting algorithm
mirror those of the fault-free deterministic variant. The subject of
the present work is to carry out a detailed and rigorous mathematical
analysis of this question in the context of the Two Grid Algorithm.
The multigrid case will be considered in our subsequent work.

\subsection{Probabilistic Model of Faults}

In order to model the effects of the laissez-faire fault mitigation approach on
an algorithm, we introduce a Bernoulli random variable given by
\begin{align}\label{BernoulliRV}
  \chi &=
       \begin{cases}
              0          & \text{with probability } q, \\
              1          & \text{with probability } 1-q.
       \end{cases}
\end{align}
If a scalar variable $x$ is subject to faults, then the effect of the fault and
the laissez-faire strategy is modelled by replacing the value of $x$ by the new
value $\tilde{x} = \chi x$.  Evidently $\tilde{x}$ is a random variable with mean
and variance given by $\Exp[\tilde{x}] = (1-q)x$ and $\Var[\tilde{x}]
=q(1-q)x^2$. By the same token, the effect of a fault on a vector-valued variable
$x\in\mathbb{R}^n$ is modelled in a similar fashion by defining $\tilde{x} =
\bc{X}x$,  where
\begin{align*}
  \bc{X}= \diag\left(\chi_{1},\dots,\chi_{n}\right)
\end{align*}
and $\chi_i$ are identically distributed Bernoulli random variables.
The variables \(\chi_{i}\) can be independent, thus modelling
componentwise faults, or block-dependent, as would be the case for a
node failure.

More formally, for given \(\varepsilon>0\), let $\SS_{\varepsilon}$ denote a set consisting of random matrices satisfying
the following conditions:
\begin{assumption} \label{as:faults}\leavevmode
  \begin{enumerate}

  \item Each $\bc{X}\in\SS_{\varepsilon}$ is a random diagonal matrix.

  \item For every $\bc{X}\in\SS_{\varepsilon}$, there holds   $\Exp[\bc{X}] =
    e(\bc{X})\b{I}$, where $e(\bc{X})>0$, and
    $|e(\bc{X})-1|\le C\varepsilon$ for some fixed $C>0$.

  \item For every
    $\bc{X}\in \SS_{\varepsilon}$ there holds $\norm{\Var[\bc{X}]}{2}
    = \max_{i,j}\left|\Cov\left(\bc{X}_{ii},\bc{X}_{jj}\right)
    \right| \leq\varepsilon$.

  \end{enumerate}
\end{assumption}

The parameter $\varepsilon$ is regarded as being small meaning that the random
matrices $\bc{X}$ behave, with high probability, like an identity matrix.  The
matrices $\bc{X}$ will appear at various points in our model for the
Fault-Prone Multigrid Algorithm. It is easy to see that if $\bc{X}_1$ and
$\bc{X}_2$ are independent diagonal matrices, then $\bc{X}_1\bc{X}_2$ is again
a random diagonal matrix.

Sometimes, a random matrix will appear in two (or more) different places in a
single equation and it will be important to clearly distinguish between the
cases where (i) each of the two matrices is a \emph{different} realisation of
the random matrix, or (ii) the matrices both correspond to the \emph{same}
realisation of the random matrix.  We shall adopt the convention whereby should
a symbol appear twice, then (ii) holds: i.e. the two occurrences represent the
\emph{same realisation} of the random matrix and the matrices are therefore
identical. However, if the square or higher power of a random matrix appears
then (i) holds: i.e. each of the matrices in the product is a \emph{different}
realisation of the same random matrix.

\subsection{Application to Silent Faults}

As mentioned above, a typical example of a soft fault would be
flipping of individual bits in a floating point number. Some bit flips
in the exponent, sign or in the significant components of the mantissa
may result in a relatively large error that becomes detectable. Such
cases could be treated using the laissez-faire strategy described
earlier. However, in other cases such faults may give a relatively
small error and, as a result, be difficult (or impossible) to
identify. Suppose that a vector $x$ is subject to such a silent fault
resulting in $x$ being replaced by $\tilde{x}$ in the machine. If the
relative error is at most $\varepsilon$, then
\begin{align}\label{Silent}
    \tilde{x}_i - x_i = \varepsilon_i\chi_i x_i
\end{align}
where $\chi_i$ is a Bernoulli random variable as in~\cref{BernoulliRV}, and
$\varepsilon_i$ is a random variable on $[-\varepsilon,\varepsilon]$.
Equally well, this means that
\begin{align*}
   \tilde{x} = \b{\Upsilon} x
\end{align*}
where $\b{\Upsilon} = \b{I}+\diag\left(\varepsilon_1\chi_1, \dots, \varepsilon_n\chi_n\right)$ is a random matrix that
satisfies the previous conditions required for membership of $\SS_{\varepsilon}$.

\subsection{Scope of Faults Covered by the Analysis}

Our analysis will cover faults that can be represented by a random
matrix belonging to $\SS_{\varepsilon}$. This means that the analysis
will cover each of the cases (i) when the fault is detectable and
mitigated using the laissez-faire strategy, and (ii) when the fault is
silent but is relatively small in the sense that it may be modelled
using~\eqref{Silent}. However, our analysis will not cover the
important case involving faults which result in entries in the
matrices or right hand sides in the problem being corrupted. Equally
well, our analysis will not cover the case of bit flips that result in
a large relative error but which nevertheless remain undetected.
Finally, this work is restricted to the solve phase of multigrid; we
assume that the setup phase is protected from faults.

\section{A Model for Fault-Prone Multigrid}
\label{sec:model-fault-prone}
\subsection{Multigrid Algorithm}
\label{sec:multigrid-algorithm}

Let $\b{A}$ be a symmetric, positive definite matrix arising from a
finite element discretization of an elliptic partial differential
equation in $d$ spatial dimensions.

We wish to use a multigrid method to compute the solution of the problem
\begin{align}\label{eq:Ax=b}
   \b{A}x = b
\end{align}
for a given load vector $b$. The multigrid method will utilise a nested
hierarchy of progressively coarser grids of dimension $0<n_0<n_1<\ldots<n_L$.
Restriction and prolongation operators are used to transfer vectors from one
level in the hierarchy to the next:
\begin{align*}
  \b{R}_\ell^{\ell+1}: \RR^{n_{\ell+1}}\rightarrow \RR^{n_\ell}, \quad
  \b{P}_{\ell+1}^\ell: \RR^{n_\ell}\rightarrow \RR^{n_{\ell+1}}.
\end{align*}
It will be assumed that, as is often the case, these operators are related by the rule
$\b{R}_\ell^{\ell+1} = \left(\b{P}_{\ell+1}^\ell\right)^{T}$.  A sequence of matrices
$\{\b{A}_\ell\}_{\ell=0}^L$ on the coarser levels is defined recursively as follows
\begin{align*}
	\b{A}_L = \b{A}; \quad
	\b{A}_\ell  = \b{R}\b{A}_{\ell+1}\b{P}, \quad \ell=0,\dots, L-1.
\end{align*}
Here and in what follows, the indices on the prolongation and restriction are
omitted whenever the appropriate choice of operator is clear from the context.

Smootheners are defined on each level $\ell=1,\ldots,L$ in the form
\begin{align}\label{eq:smoothener}
	S_\ell(b_\ell, x_\ell) = x_\ell + \b{N}_\ell (b_\ell - \b{A}_\ell x_\ell),
\end{align}
where $\b{N}_\ell$ is an approximation to the inverse of $\b{A}_\ell$. In particular,
choosing the matrix $\b{N}_\ell=\theta \b{D}_\ell^{-1}$ corresponds to the damped
Jacobi smoothener with damping factor \(\theta\).

The multigrid method is given in \Cref{alg:multigrid} and may be
invoked to obtain an approximate solution of problem~\cref{eq:Ax=b} by a call
to $\MG_L(b,0)$.
\begin{algorithm}
  \begin{algorithmic}[1]
    \Require Right hand side $b_\ell$; Initial iterate $x_\ell$
    \Ensure $\MG_\ell(b_\ell, x_\ell)$
    \If{$\ell=0$}
    \Return $\b{A}_{0}^{-1}b_{0}$  \Comment{Exact solve on coarsest grid}
    \Else
    \For{$i\leftarrow 1$ \To{} $\nu_{1}$}
    \State $x_\ell\leftarrow S_\ell\left(b_\ell,x_\ell\right)$  \Comment{$\nu_1$ pre-smoothing steps}
    \EndFor
    \State $d_{\ell-1}\leftarrow\b{R}\left(b_\ell-\b{A}_\ell x_\ell\right)$  \Comment{Restriction to coarser grid}
    \State $e_{\ell-1}\leftarrow 0$
    \For{$j\leftarrow 1$ \To{} $\gamma$}
    \State $e_{\ell-1}\leftarrow \MG_{\ell-1}(d_{\ell-1},e_{\ell-1})$  \Comment{$\gamma$ coarse grid correction steps}
    \EndFor
    \State $x_\ell\leftarrow x_\ell + \b{P}e_{\ell-1}$  \Comment{Prolongation to finer grid}
    \For{$i\leftarrow 1$ \To{} $\nu_{2}$}
    \State $x_\ell\leftarrow S_\ell\left(b_\ell,x_\ell\right)$  \Comment{$\nu_2$ post-smoothing steps}
    \EndFor
    \EndIf
  \end{algorithmic}
  \caption{Fault-free multigrid method $\MG_{\ell}$}
  \label{alg:multigrid}
\end{algorithm}


\subsection{A Model for Fault-Prone Multigrid}

The multigrid algorithm comprises a number of steps each of which may
be affected by faults were the algorithm to be implemented on a
fault-prone machine. In the absence of faults, a single iteration of
multigrid replaces the current iterate $x_\ell$ by
$\MG_\ell(b_\ell, x_\ell)$ defined in \Cref{alg:multigrid}. However,
in the presence of faults, a single iteration of multigrid means that
$x_\ell$ is replaced by $\c{\MG}_\ell(b_\ell, x_\ell)$ where
$\c{\MG}_\ell$ differs from $\MG_\ell$ in general due to corruption of
intermediate steps in the computations arising from faults. The object
of this section is to develop a model for faults and define the
corresponding operators $\c{\MG}_\ell$, $\ell\in\NN$. For simplicity,
we assume that the coarse grid is of moderate size meaning that the
exact solve $\b{A}_0^{-1}$ on the coarsest grid is not prone to
faults, i.e. $\c{\MG}_0(d_0,\cdot)=\b{A}_0^{-1}d_0$. This is a
reasonable assumption when the size of the coarse grid problem is
sufficiently small that either faults are not an issue or, if they
are, then replication can be used to achieve fault resilience. The
extension of the analysis to the case of fault-prone coarse grid
correction does not pose any fundamental difficulties but is not
pursued in the present work.

\subsection{Smoothening}
\label{sec:smoothening}

A single application of the smoothener $S_\ell$ to an iterate $x_\ell$
takes the form
\begin{align}
	S_\ell(b_\ell, x_\ell) = x_\ell + \b{N}_\ell (b_\ell - \b{A}_\ell x_\ell) \label{eq:1}
\end{align}
with each of the interior sub-steps in \cref{eq:1} being susceptible to faults.

The innermost step is the computation of the residual
$\rho_\ell = b_\ell - \b{A}_\ell x_\ell$. The action of the matrix
$\b{A}_\ell$ is applied repeatedly throughout the solution phase. We
shall assume that neither the entries or structure of matrix
\(\b{A}_{\ell}\) nor the right hand side \(b_{\ell}\) are subject to
corruption, only computation involving them. This would be the case
were the information needed to compute the action of $\b{A}_\ell$
placed in non-volatile random access memory (NVRAM) with relatively
modest overhead.
The net effect of faults in the computation of \(\rho_{\ell}\) is that
the preconditioner $\b{N}_\ell$ does not act on the true residual but
rather on a corrupted version modelled by $\bc{X}_1\rho_\ell$ where
$\bc{X}_1$ is a random diagonal matrix.

\begin{figure}
  \centering
  \begin{tabular*}{1.0\linewidth}{llrcl}
    Global: &\phantom{\xcancel{\pc{1}}}&\(x_{\ell}\)
    & \(\longrightarrow\)
    & \(x_{\ell}+\bc{X}_{\ell}^{(S)}\b{N}_{\ell}\left(b_{\ell}-\b{A}_{\ell}x_{\ell}\right)\)
    \\
    Distributed:&\vphantom{\xcancel{\pc{1}}}\xcancel{\pc{1}}
                                       & \(\left[x_{\ell}\right]_{1}\) & \(\longrightarrow\) & \(\left[x_{\ell}\right]_{1}+\xcancel{\left[\b{N}_{\ell}\right]_{1}\left[b_{\ell}-\b{A}_{\ell}x_{\ell}\right]_{1}} = \left[x_{\ell}\right]_{1}\) \\
            &\pc{2} & \(\left[x_{\ell}\right]_{2}\) & \(\longrightarrow\) & \(\left[x_{\ell}\right]_{2}+\left[\b{N}_{\ell}\right]_{2} \left[b_{\ell}-\b{A}_{\ell}x_{\ell}\right]_{2}\) \\
            &\pc{3} & \(\left[x_{\ell}\right]_{3}\) & \(\longrightarrow\) & \(\left[x_{\ell}\right]_{3}+\left[\b{N}_{\ell}\right]_{3}\left[b_{\ell}-\b{A}_{\ell}x_{\ell}\right]_{3}\)
      \end{tabular*}
  \caption{Schematic representation of a node failure during smoothing and the remedial action taken by the laissez-faire approach in the case of three compute nodes. \(\left[\bullet\right]_{i}\) represents the part of the quantity \(\bullet\) local to node \(i\).}
  \label{fig:faultIllustration}
\end{figure}

By the same token, the action of $\b{N}_\ell$ is prone to faults,
meaning that the true result $\b{N}_\ell\bc{X}_1\rho_\ell$ may be
corrupted and is therefore modelled by
$\bc{X}_2 \b{N}_\ell\bc{X}_1\rho_\ell$, where $\bc{X}_2$ is yet
another random diagonal matrix. The matrix $\b{N}_\ell$ corresponding
to damped Jacobi is diagonal and hence
$\bc{X}_2 \b{N}_\ell\bc{X}_1 = \bc{X}^{(S)} \b{N}_\ell$, where
$\bc{X}^{(S)}=\bc{X}_1\bc{X}_2$ is again a random diagonal matrix.
Consequently, the combined effect of the two sources of error can be
modelled by a single random diagonal matrix.

In summary, our model for the action of a smoothener prone to faults consists
of replacing the true smoothener $S$ used in the pre- and post-smoothing step
in the multigrid algorithm by the \emph{non-deterministic smoothener}
\begin{align}\label{eq:ndsmoothener}
	\c{S}_\ell(b_\ell, x_\ell) =
		x_\ell + \bc{X}^{(S)}_\ell \b{N}_\ell (b_\ell - \b{A}_\ell x_\ell)
\end{align}
in which $\bc{X}^{(S)}_\ell$ is a random diagonal matrix which models
the effect of the random faulty nature of the underlying hardware. The
model \cref{eq:ndsmoothener} tacitly assumes that the current iterate
\(x_{\ell}\) remains fault-free. We illustrate the remedial action to
a node failure in \Cref{fig:faultIllustration}.

\subsection{Restriction, Prolongation and Coarse Grid Correction}
\label{sec:restr-prol-coarse}

The restriction of the residual described by the step
\begin{align}\label{eq:dell}
	d_{\ell-1} = \b{R}(b_\ell - \b{A}_\ell x_\ell)
\end{align}
is prone to faults. Firstly, as in the case of the smoothener, the true
residual $\rho_\ell$ is prone to corruption and is modelled by
$\bc{X}^{(\rho)}_\ell\rho_\ell$. The resulting residual is then operated on by the
restriction $\b{R}$, which is itself prone to faults modelled using a random
diagonal matrix $\bc{X}^{(R)}_{\ell-1}$. We arrive at the following model for the
effect of faults on~\cref{eq:dell}:
\begin{align*}
	d_{\ell-1} = \bc{X}^{(R)}_{\ell-1} \b{R}\bc{X}^{(\rho)}_\ell
	(b_\ell - \b{A}_\ell x_\ell).
\end{align*}

The coarse grid correction $e_{\ell-1}$ is obtained by performing $\gamma$
iterations of $\c{\MG}_{\ell-1}$ with data $d_{\ell-1}$ and a zero initial
iterate. The effect of faults when applying the prolongation to $e_{\ell-1}$ is
also modelled by a random diagonal matrix $\bc{X}^{(P)}_\ell$ leading to the
following model for the effect of faults on the coarse grid correction and
prolongation steps:
\begin{align*}
	x_\ell \leftarrow x_\ell + \bc{X}^{(P)}_\ell \b{P}e_{\ell-1}.
\end{align*}

\subsection{Model for Multigrid Algorithm in Presence of Faults}
\label{sec:model-mult-algor}

Replacing each of the steps in the fault-free Multigrid
Algorithm~\ref{alg:multigrid} with their non-deterministic equivalent
yields the following model for the Fault-Prone Multigrid
Algorithm~\ref{alg:ndmg} and defines the associated fault-prone
multilevel operators $\c{\MG}_\ell$, $\ell\in\NN$.
\begin{algorithm}
  \begin{algorithmic}[1]
    \Require Right hand side $b_\ell$; Initial iterate $x_\ell$
    \Ensure $\c{\MG}_\ell(b_\ell, x_\ell)$
    \If{$\ell=0$}
    \Return $\b{A}_{0}^{-1}b_{0}$  \Comment{Exact solve on coarsest grid}
    \Else
    \For{$i\leftarrow 1$ \To{} $\nu_{1}$}
    \State $x_\ell\leftarrow \c{S}_\ell\left(b_\ell,x_\ell\right)$  \Comment{$\nu_1$ pre-smoothing steps}
    \EndFor
    \State $d_{\ell-1}\leftarrow\bc{X}^{(R)}_{\ell-1}\b{R}\bc{X}^{(\rho)}_\ell\left(b_\ell-\b{A}_\ell x_\ell\right)$  \Comment{Restriction to coarser grid}
    \State $e_{\ell-1}\leftarrow 0$
    \For{$j\leftarrow 1$ \To{} $\gamma$}
    \State $e_{\ell-1}\leftarrow \c{\MG}_{\ell-1}(d_{\ell-1},e_{\ell-1})$  \Comment{$\gamma$ coarse grid correction steps}
    \EndFor
    \State $x_\ell\leftarrow x_\ell + \bc{X}^{(P)}_\ell\b{P}e_{\ell-1}$  \Comment{Prolongation to finer grid}
    \For{$i\leftarrow 1$ \To{} $\nu_{2}$}
    \State $x_\ell\leftarrow \c{S}_\ell\left(b_\ell,x_\ell\right)$  \Comment{$\nu_2$ post-smoothing steps}
    \EndFor
    \EndIf
  \end{algorithmic}
  \caption{Model for Fault-Prone Multigrid Algorithm $\c{\MG}_\ell$ where $\bc{X}^{(\bullet)}_\ell$ are random diagonal matrices.}
  \label{alg:ndmg}
\end{algorithm}

The classical approach to the analysis of iterative solution methods for
linear systems uses the notion of an iteration matrix. For example, in the
case of the fault-free smoothening step~\cref{eq:smoothener}
\begin{align}\label{eq:defitmatrix}
	x - S_\ell(\b{A}_\ell x, y) = \b{E}_\ell^{S}(x-y),
        \quad\forall x,y\in\RR^{n_\ell}
\end{align}
where $\b{E}_\ell^{S}=\b{I}- \b{N}_\ell \b{A}_\ell$ is the iteration
matrix. The corresponding result for the fault-prone
smoothener~\cref{eq:ndsmoothener} is given by
\begin{align}\label{eq:defSitmatrix}
	x - \c{S}_\ell(\b{A}_\ell x, y)
	= \bc{E}_\ell^{S} (x-y),
        \quad\forall x,y\in\RR^{n_\ell}
\end{align}
where the iteration matrix $\bc{E}_\ell^{S}=\b{I}-\bc{X}_\ell^{(S)}
\b{N}_\ell \b{A}_\ell$ is now \emph{random}.  By analogy
with~\cref{eq:defitmatrix}, we define the iteration matrix for the Fault-Prone
Multigrid Algorithm~\ref{alg:ndmg} by the equation
\begin{align}\label{eq:defMGitmatrix}
  x - \c{\MG}_\ell(\b{A}_\ell x, y) = \bc{E}_\ell(x-y),
  \quad\forall x,y\in\RR^{n_\ell}.
\end{align}
In particular, $x - \c{\MG}_{0}(\b{A}_0 x,\bullet)=0$ and hence
$\bc{E}_{0}=\b{0}$, i.e. the zero matrix.

The $\nu_1$ pre-smoothing steps in \Cref{alg:ndmg} can be expressed in
the form
\begin{align*}
   x^{(0)} = x_\ell; \quad
   x^{(i)} = \c{S}_\ell(b_\ell, x^{(i-1)}), \quad i=1,\ldots,\nu_1.
\end{align*}
Using~\cref{eq:defSitmatrix} gives
\begin{align*}
   x-x^{(i)}_\ell = \bc{E}^{S}_\ell(x-x^{(i-1)}_\ell), \quad i=1,\ldots,\nu_1
\end{align*}
and hence
\begin{align}\label{eq:presmooth}
   x-x^{(\nu_1)}_\ell = \left(\bc{E}^{S}_\ell\right)^{\nu_1} (x-x_\ell).
\end{align}
Using this notation means that the vector $d_{\ell-1}$ appearing in
\Cref{alg:ndmg} is given by
\begin{align*}
   d_{\ell-1}
   = \bc{X}^{(R)}_{\ell-1} \b{R}\bc{X}^{(\rho)}_\ell
      (b_\ell-\b{A}_\ell x^{(\nu_1)}_\ell)
   = \bc{X}^{(R)}_{\ell-1} \b{R}\bc{X}^{(\rho)}_\ell \b{A}_\ell
      (x-x^{(\nu_1)}_\ell).
\end{align*}
where $b_\ell=\b{A}_\ell x$.

The coarse grid correction steps of \Cref{alg:ndmg} can be written in
the form
\begin{align*}
     e^{(0)}_{\ell-1} = 0; \quad
     e^{(j)}_{\ell-1} = \c{\MG}_{\ell-1}(d_{\ell-1},e^{(j-1)}_{\ell-1}),
	\quad j=1,\ldots,\gamma.
\end{align*}
Let $z=\b{A}_{\ell-1}^{-1}d_{\ell-1}$ then
\begin{align*}
    z - e^{(j)}_{\ell-1}
    = z - \c{\MG}_{\ell-1}(d_{\ell-1},e^{(j-1)}_{\ell-1})
    = \bc{E}_{\ell-1}(z - e^{(j-1)}_{\ell-1}), \quad j=1,\ldots,\gamma
\end{align*}
thanks to~\cref{eq:defMGitmatrix}. Iterating this result and recalling that
$e^{(0)}_{\ell-1}=0$, gives $z - e^{(\gamma)}_{\ell-1} = \bc{E}_{\ell-1}^\gamma
z$, or, equally well,
\begin{align*}
    e^{(\gamma)}_{\ell-1} =
       (\b{I}-\bc{E}_{\ell-1}^\gamma)\b{A}_{\ell-1}^{-1}d_{\ell-1}.
\end{align*}

Using these notations, the prolongation to the finer grid step in
\Cref{alg:ndmg} takes the form
\begin{align*}
    x^{(P)}_\ell = x^{(\nu_1)}_\ell + \bc{X}^{(P)}_\ell \b{P}e^{(\gamma)}_{\ell-1}
\end{align*}
and hence, by collecting results, we deduce that
\begin{align*}
    x-x^{(P)}_\ell
    &= (x-x^{(\nu_1)}_\ell) - \bc{X}^{(P)}_\ell \b{P}e^{(\gamma)}_{\ell-1} \\
    &= \left[ \b{I}
    - \bc{X}^{(P)}_\ell\b{P}( \b{I} -\bc{E}^\gamma_{\ell-1} )
	  \b{A}_{\ell-1}^{-1}\bc{X}^{(R)}_{\ell-1} \b{R}\bc{X}^{(\rho)}_\ell \b{A}_\ell
    \right](x-x^{(\nu_1)}_\ell).
\end{align*}
Arguments identical to those leading to~\cref{eq:presmooth} give the following
result
\begin{align*}
   x-\c{\MG}_{\ell}(\b{A}_\ell x,x_\ell)
      = \left(\bc{E}^{S}_\ell\right)^{\nu_2}(x-x^{(P)}_\ell).
\end{align*}
which, in view of identity~\cref{eq:defMGitmatrix} and~\cref{eq:presmooth},
yields the following recursive formula for the iteration matrix of
the Fault-Prone Multigrid Algorithm~\ref{alg:ndmg}:
\begin{align}\label{eq:Mrecursion}
    \bc{E}_\ell
    = \left(\bc{E}^{S,\text{post}}_\ell\right)^{\nu_2}
      \left[ \b{I}
    - \bc{X}^{(P)}_\ell\b{P}( \b{I} -\bc{E}^\gamma_{\ell-1} )
	  \b{A}_{\ell-1}^{-1}\bc{X}^{(R)}_{\ell-1} \b{R}\bc{X}^{(\rho)}_\ell \b{A}_\ell
    \right]
    \left(\bc{E}^{S,\text{pre}}_\ell\right)^{\nu_1},
\end{align}
for $\ell=1,\ldots,L$ with $\bc{E}_0=\b{0}$. Here, we have used
superscripts pre and post to reflect that the pre- and
post-smootheners are independent realisations of the same random
matrix.

By setting $\bc{E}_{L-1}=\b{0}$ and applying the
recursion~\cref{eq:Mrecursion} in the case $\ell=L$ yields a formula
for the iteration matrix of the \emph{Fault-Prone Two Grid Algorithm}:
\begin{align}\label{eq:twolevel}
    \bc{E}^{TG}_L
    &= \left(\bc{E}^{S,\text{post}}_L\right)^{\nu_2}
      \left[ \b{I} - \bc{X}^{(P)}_L\b{P}
	  \b{A}_{L-1}^{-1}\bc{X}^{(R)}_{L-1} \b{R}\bc{X}^{(\rho)}_{L} \b{A}_L
    \right]
    \left(\bc{E}^{S,\text{pre}}_L\right)^{\nu_1} \\
  &=\left(\bc{E}^{S,\text{post}}_L\right)^{\nu_2}
  \bc{E}_{L}^{CG}
  \left(\bc{E}^{S,\text{pre}}_L\right)^{\nu_1},\nonumber
\end{align}
corresponding to using an exact solver on level $L-1$.
Here \(\bc{E}^{CG}_{L}\) is the iteration matrix of the exact fault-prone coarse grid correction.

\subsection{Lyapunov Spectral Radius and Replica Trick}
\label{sec:lyap-spectr-radi}

If the Fault-Prone Multigrid, \Cref{alg:ndmg}, is applied using a
starting iterate with error $e^{(0)}$, then the error after $N\in\NN$
iterations is given by $e^{(N)}=\bc{E}_L^N e^{(0)}$. The matrix
$\bc{E}_L$ defined recursively by~\cref{eq:Mrecursion} is random, and
the product $\bc{E}_L^N$ should be interpreted as a product of $N$
independent samples of the matrix $\bc{E}_L$. The Fault-Prone
Multigrid iteration will converge provided that
\begin{align*}
  \lim_{N\rightarrow\infty} \norm{\bc{E}_L^N e^{(0)}}{}^{1/N} < 1
  \quad\text{a.s.}
\end{align*}
where $\norm{\bullet}{}$ is any norm on the (finite dimensional) space
$\RR^{n_L}$. The limit is understood in the sense of almost sure \emph{a.s.}
convergence for a given initial iterate $e^{(0)}$. A classic result, due to
Furstenberg and Kesten~\cite{FurstenbergKesten1960_ProductsRandomMatrices}, states that if
$\Exp \left[\log\left(\bc{E}_L\right)^{+}\right]<\infty$ then the following limit exists
\begin{align*}
  \lim_{N\rightarrow\infty} \norm{\bc{E}_L^N}{}^{1/N} = \varrho(\bc{E}_L)
  \quad\text{a.s.}
\end{align*}
where the non-random quantity
\begin{align*}
 \varrho(\bc{E}_L)=\lim_{N\rightarrow\infty}\exp\left\{\Exp\left[\log\norm{\bc{E}_{L}^{N}}{}^{1/N}\right]\right\}
\end{align*}
is the \emph{Lyapunov spectral radius}. Further
details and properties of the Lyapunov spectral radius will be found in the
books by Bougerol and Lacroix~\cite{BougerolLacroix1985_ProductsRandomMatricesWith} or Crisanti et
al.~\cite{CrisantiPaladinEtAl1993_ProductsRandomMatrices}.

Suppose the matrix $\bc{E}_L$ is non-random, then the Lyapunov
spectral radius coincides with the usual spectral radius
$\rho\left(\bc{E}_L\right)$, and it extends the notion of an
asymptotic rate of convergence to random iterative schemes. In
particular, $\varrho\left(\bc{E}_L\right)<1$ means that the matrix
product is almost surely convergent, whereas
$\varrho\left(\bc{E}_L\right)>1$ means that it is almost surely
divergent.

Consequently, the convergence analysis of the Fault-Prone Multigrid
Method, \Cref{alg:ndmg}, boils down to the study of the Lyapunov
spectral radius of the random iteration matrix $\bc{E}_L$.
Unfortunately, the treatment of the Lyapunov spectral radius is
considerably more complex than dealing with the usual spectral radius.
In general its determination is a NP-hard
problem~\cite{TsitsiklisBlondel1997_LyapunovExponentJointSpectral}.
Since the Lyapunov spectral radius is not necessarily a continuous
function of the fault
probabilities~\cite{AinsworthGlusa2015_NumericalMathematicsAdvancedApplicationsEnumath2015},
straightforward perturbation analysis cannot be employed. The
following result is sometimes referred to as the \emph{Replica Trick}:
\begin{restatable}[Replica trick \cite{CrisantiPaladinEtAl1993_ProductsRandomMatrices}]{theorem}{replicaTrickLemma}
  \label{lem:replicaTrick}
  Let \(\bc{E}\) be a random square matrix.
  Then
  \begin{align*}
    \varrho\left(\bc{E}\right) & \leq \sqrt{\rho\left(\Exp\left[ \bc{E}\otimes\bc{E}\right]\right)},
  \end{align*}
  where $\rho(\bullet)$ denotes the usual spectral radius.
\end{restatable}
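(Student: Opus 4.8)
The plan is to express $\varrho(\bc{E})$ in terms of the growth rate of moments of the norms of products $\bc{E}^{N}$, and then to linearise the problem by passing to the tensor (Kronecker) square, whose expectation is a \emph{deterministic} matrix amenable to ordinary spectral radius estimates. Concretely, choose the Euclidean norm $\norm{\cdot}{2}$ on $\RR^{n}$; then for a vector $v$ one has $\norm{\bc{E}^{N}v}{2}^{2} = (v\otimes v)^{T}\bigl((\bc{E}^{N})\otimes(\bc{E}^{N})\bigr)(v\otimes v)$ using the identity $(Mv)\otimes(Mv) = (M\otimes M)(v\otimes v)$. Since $\bc{E}^{N}$ is a product of $N$ independent copies $\bc{E}_{(1)},\dots,\bc{E}_{(N)}$ of $\bc{E}$, the mixed-product property of the Kronecker product gives $(\bc{E}^{N})\otimes(\bc{E}^{N}) = \prod_{k=1}^{N}\bigl(\bc{E}_{(k)}\otimes\bc{E}_{(k)}\bigr)$, a product of $N$ independent copies of the single random matrix $\bc{E}\otimes\bc{E}$. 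Taking expectations and using independence, $\Exp\bigl[(\bc{E}^{N})\otimes(\bc{E}^{N})\bigr] = \bigl(\Exp[\bc{E}\otimes\bc{E}]\bigr)^{N}$.

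Next I would bound the exponential growth rate. From the quadratic-form identity above, $\Exp\norm{\bc{E}^{N}v}{2}^{2} = (v\otimes v)^{T}\bigl(\Exp[\bc{E}\otimes\bc{E}]\bigr)^{N}(v\otimes v) \le \norm{v\otimes v}{2}^{2}\,\norm{\bigl(\Exp[\bc{E}\otimes\bc{E}]\bigr)^{N}}{2}$, and Gelfand's formula yields $\limsup_{N\to\infty}\norm{(\Exp[\bc{E}\otimes\bc{E}])^{N}}{2}^{1/N} = \rho\bigl(\Exp[\bc{E}\otimes\bc{E}]\bigr)$. Hence $\limsup_{N\to\infty}\bigl(\Exp\norm{\bc{E}^{N}v}{2}^{2}\bigr)^{1/(2N)} \le \sqrt{\rho(\Exp[\bc{E}\otimes\bc{E}])}$, and the same holds for the matrix norm $\Exp\norm{\bc{E}^{N}}{2}^{2}$ by summing over a basis. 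The remaining step is to relate this second-moment growth rate to the Lyapunov spectral radius $\varrho(\bc{E}) = \lim_{N}\exp\{\Exp[\log\norm{\bc{E}^{N}}{2}^{1/N}]\}$. Here I would invoke Jensen's inequality in the form $\Exp[\log Y] \le \log\Exp[Y]$ applied to $Y = \norm{\bc{E}^{N}}{2}^{2/N}$, giving $\exp\{\Exp[\log\norm{\bc{E}^{N}}{2}^{1/N}]\} \le \bigl(\Exp\norm{\bc{E}^{N}}{2}^{2/N}\bigr)^{1/2} \le \bigl(\Exp\norm{\bc{E}^{N}}{2}^{2}\bigr)^{1/(2N)}$, and then let $N\to\infty$.

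The main technical obstacle is handling the interchange of limit and expectation cleanly: $\varrho(\bc{E})$ is defined via $\Exp[\log\norm{\bc{E}^{N}}{2}^{1/N}]$, and one must be careful that the sequence $\tfrac1N\Exp[\log\norm{\bc{E}^{N}}{2}]$ is subadditive (Fekete's lemma, using $\norm{\bc{E}^{N+M}}{2}\le\norm{\bc{E}^{N}}{2}\,\norm{\bc{E}^{M}}{2}$ on the appropriate product of independent factors) so the defining limit exists, and that the Furstenberg–Kesten integrability hypothesis $\Exp[\log^{+}\norm{\bc{E}}{2}]<\infty$ holds — which is automatic here since the fault matrices in $\SS_{\varepsilon}$ are bounded, so $\norm{\bc{E}}{2}$ is a bounded random variable. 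Once integrability is in hand, the chain of inequalities $\varrho(\bc{E}) \le \bigl(\Exp\norm{\bc{E}^{N}}{2}^{2}\bigr)^{1/(2N)} \to \sqrt{\rho(\Exp[\bc{E}\otimes\bc{E}])}$ is routine. A secondary point to get right is the norm-equivalence constants when moving between the vector form and the matrix form of the second moment; these are $N$-independent, so they disappear after taking $N$-th roots and do not affect the bound.
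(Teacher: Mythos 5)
Your proposal is correct and follows essentially the same route as the paper: linearise via the Kronecker square, use independence of the $N$ factors to get $\Exp\bigl[(\bc{E}^{N})^{\otimes 2}\bigr]=\bigl(\Exp[\bc{E}\otimes\bc{E}]\bigr)^{N}$, and pass from the Lyapunov exponent to the second moment by Jensen's inequality. The only (cosmetic) difference is that you work with the spectral norm and Gelfand's formula where the paper uses the Frobenius norm, the $\operatorname{vec}(\b{I})$ quadratic-form identity and an explicit eigen-decomposition of $\Exp[\bc{E}^{\otimes 2}]$; your variant even sidesteps the paper's loss-of-generality assumption on the dominant eigenvalue.
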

We refer the reader to \Cref{sec:preliminaries} for the proof of \Cref{lem:replicaTrick}.
One attractive feature of this estimate is the appearance of the usual spectral
radius of the non-random matrix $\Exp \left[\bc{E}_L\otimes\bc{E}_L\right]$ rather than the
Lyapunov spectral radius which, however, comes at the price of having to
deal with the Kronecker product $\bc{E}_L\otimes\bc{E}_L$ which, as we shall
see later, presents its own difficulties.

\subsection{Behaviour of the Smoothener under the Model}
\label{sec:behav-smooth-under}

The iteration matrix for the fault-prone smoothener~\cref{eq:ndsmoothener} is given
by
\begin{align*}
   \bc{E}_\ell^{S}=\b{I}-\bc{X}_\ell^{(S)}\b{N}_\ell\b{A}_\ell,
\end{align*}
Here, we take $\bc{X}_\ell^{(S)}=\diag(\chi_1,\ldots,\chi_{n_\ell})$ to be a random diagonal
matrix of componentwise faults with $\Exp(\chi_j) = 1-q$ and $\Cov(\chi_j,\chi_{j})=q(1-q)\delta_{ij}$. The behaviour of
the fault-prone smoothener is governed by the Lyapunov spectral radius
$\varrho(\bc{E}_\ell^{S})$.

\begin{theorem}\label{thm:smoothener}
Let $\b{E}_\ell^{S}=\b{I}-\b{N}_\ell\b{A}_\ell$ be the iteration matrix for
any convergent smoothener.  Then, the corresponding fault-prone smoothener
satisfies
\begin{align*}
   \varrho(\bc{E}_\ell^{S})\le\sqrt{
	   (1-q)\norm{\b{E}_\ell^{S}}{2}^2
	   + q\left(\norm{\b{E}_\ell^{S}}{2}+\norm{\b{N}_\ell\b{A}_\ell}{2}\right)^2
   },
\end{align*}
for all $q\in[0,1]$.
\end{theorem}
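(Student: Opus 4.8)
The plan is to derive the estimate from the Replica trick (\Cref{lem:replicaTrick}) applied to $\bc{E}=\bc{E}_\ell^S$: it reduces matters to bounding $\rho\left(\Exp\left[\bc{E}_\ell^S\otimes\bc{E}_\ell^S\right]\right)$, after which one takes a square root. Write $\b{M}=\b{N}_\ell\b{A}_\ell$, so $\bc{E}_\ell^S=\b{I}-\bc{X}_\ell^{(S)}\b{M}$ with $\bc{X}_\ell^{(S)}=\diag(\chi_1,\dots,\chi_{n_\ell})$. First I would expand
\begin{align*}
  \bc{E}_\ell^S\otimes\bc{E}_\ell^S
  =\b{I}\otimes\b{I}
  -\left(\bc{X}_\ell^{(S)}\b{M}\right)\otimes\b{I}
  -\b{I}\otimes\left(\bc{X}_\ell^{(S)}\b{M}\right)
  +\left(\bc{X}_\ell^{(S)}\otimes\bc{X}_\ell^{(S)}\right)\left(\b{M}\otimes\b{M}\right)
\end{align*}
and take expectations term by term. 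The two copies of $\bc{X}_\ell^{(S)}$ are, by the stated convention for repeated symbols, the \emph{same} realisation, so the diagonal of $\Exp\left[\bc{X}_\ell^{(S)}\otimes\bc{X}_\ell^{(S)}\right]$ is made up of the entries $\Exp[\chi_i\chi_j]$; using $\Exp[\chi_i]=1-q$, independence of the $\chi_i$ and the Bernoulli identity $\chi_i^2=\chi_i$ gives $\Exp[\chi_i\chi_j]=(1-q)^2+q(1-q)\delta_{ij}$. The $(1-q)^2$ part reassembles into a Kronecker square and the diagonal correction contributes one extra term, so that
\begin{align*}
  \Exp\left[\bc{E}_\ell^S\otimes\bc{E}_\ell^S\right]
  =\b{F}\otimes\b{F}+q(1-q)\,\b{J}\left(\b{M}\otimes\b{M}\right),
\end{align*}
where $\b{F}=\Exp\left[\bc{E}_\ell^S\right]=\b{I}-(1-q)\b{M}$ and $\b{J}=\sum_i\left(\b{e}_i\b{e}_i^{T}\right)\otimes\left(\b{e}_i\b{e}_i^{T}\right)$ is the orthogonal projection onto $\Span\left\{\b{e}_i\otimes\b{e}_i\right\}$, so $\norm{\b{J}}{2}=1$. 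That the $\b{J}$ term survives — it would drop out for two independent copies of the random matrix — is precisely the same-realisation subtlety, and it is needed for the bound to be valid.

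Next I would pass to norms. Since $\rho(\bullet)\le\norm{\bullet}{2}$ and $\norm{\b{A}\otimes\b{B}}{2}=\norm{\b{A}}{2}\norm{\b{B}}{2}$, submultiplicativity yields $\rho\left(\Exp\left[\bc{E}_\ell^S\otimes\bc{E}_\ell^S\right]\right)\le\norm{\b{F}}{2}^{2}+q(1-q)\norm{\b{M}}{2}^{2}$. The decisive step is to estimate $\norm{\b{F}}{2}$ through the splitting
\begin{align*}
  \b{F}=\b{I}-(1-q)\b{M}=\left(\b{I}-\b{M}\right)+q\b{M}=\b{E}_\ell^S+q\b{M},
\end{align*}
whence $\norm{\b{F}}{2}\le\norm{\b{E}_\ell^S}{2}+q\norm{\b{M}}{2}$. (The more obvious splitting $\b{F}=q\b{I}+(1-q)\b{E}_\ell^S$ is weaker and does not reproduce the stated constant.) Substituting and expanding $\left(\norm{\b{E}_\ell^S}{2}+q\norm{\b{M}}{2}\right)^{2}+q(1-q)\norm{\b{M}}{2}^{2}$, the two $q^{2}\norm{\b{M}}{2}^{2}$ terms cancel and the remainder is exactly $(1-q)\norm{\b{E}_\ell^S}{2}^{2}+q\left(\norm{\b{E}_\ell^S}{2}+\norm{\b{M}}{2}\right)^{2}$, with $\norm{\b{M}}{2}=\norm{\b{N}_\ell\b{A}_\ell}{2}$. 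Taking the square root via \Cref{lem:replicaTrick} then gives the theorem, and no hypothesis beyond $q\in[0,1]$ is used.

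The one step needing genuine care is the second-moment computation: obtaining the $\delta_{ij}$/$\b{J}$ correction correctly, and then recognising that $\b{F}$ should be split as $\b{E}_\ell^S+q\b{N}_\ell\b{A}_\ell$, since that is what makes the two contributions of $\norm{\b{N}_\ell\b{A}_\ell}{2}^{2}$ combine into the claimed expression. Everything else is routine: submultiplicativity of $\norm{\cdot}{2}$, the identity $\norm{\b{A}\otimes\b{B}}{2}=\norm{\b{A}}{2}\norm{\b{B}}{2}$, and $\norm{\b{J}}{2}=1$.
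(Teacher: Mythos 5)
Your proposal is correct and follows essentially the same route as the paper: apply the replica trick, write $\Exp\bigl[\bc{E}^S\otimes\bc{E}^S\bigr]=\Exp\bigl[\bc{E}^S\bigr]^{\otimes2}+q(1-q)\b{K}\left(\b{N}\b{A}\right)^{\otimes2}$ (your projection $\b{J}$ is the paper's $\b{K}$), bound the variance term by $q(1-q)\norm{\b{N}\b{A}}{2}^{2}$, and use the splitting $\b{I}-(1-q)\b{N}\b{A}=\b{E}^S+q\b{N}\b{A}$ followed by the same algebraic recombination. The only cosmetic difference is that you derive the variance correction entrywise via $\Exp[\chi_i\chi_j]$ while the paper quotes it as $\Var[\bc{X}^{(S)}]$; the content is identical.
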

\begin{proof}
  Let $\bc{B}=\bc{X}^{(S)}\b{N}\b{A}$ and dispense with subscripts and superscripts for the
  duration of the proof.
  Then
  \begin{align}
    \Exp\left[\bc{E}\otimes\bc{E}\right] &= \Exp\left[\b{I}\otimes\b{I}-\bc{B}\otimes\b{I}-\b{I}\otimes\bc{B}+\bc{B}\otimes\bc{B}\right] \nonumber\\
    &=\b{I}\otimes\b{I} -\Exp\left[\bc{B}\right]\otimes\b{I}-\b{I}\otimes\Exp\left[\bc{B}\right]+\Exp\left[\bc{B}\right]\otimes\Exp\left[\bc{B}\right] \nonumber\\
    &\quad- \Exp\left[\bc{B}\right]\otimes\Exp\left[\bc{B}\right] + \Exp\left[\bc{B}\otimes \bc{B}\right] \nonumber\\
    &=\Exp\left[\bc{E}\right]\otimes\Exp\left[\bc{E}\right] + \Var\left[\bc{B}\right]\label{eq:oneRandomMatrix}
  \end{align}
  and
  \begin{align*}
    \Var\left[\bc{B}\right]
    = \Var\left[\bc{X}^{(S)}\right]\left(\b{NA}\right)\otimes\left(\b{NA}\right)
    = q(1-q) \b{K}\left(\b{NA}\right)\otimes\left(\b{NA}\right),
  \end{align*}
  where
  \begin{align*}
    \b{K} = \blockdiag\left(\vec{e}^{\,(j)}\otimes\left(\vec{e}^{\,(j)}\right)^T,~j=1,\ldots,n\right)
  \end{align*}
  and $\vec{e}^{\,(j)}$ the $j$-th canonical unit vector in \(\mathbb{R}^{n}\). Hence, since
  $\norm{\b{C}\otimes\b{C}}{2}=\norm{\b{C}}{2}^2$ and
  $\norm{\b{K}_\ell\b{C}}{2}\leq\norm{\b{C}}{2}$ for any compatible matrix
  $\b{C}$, we obtain
  \begin{align*}
    \norm{\Var\left[\bc{B}\right]}{2}
    &= q(1-q)\norm{\b{K}\left(\b{NA}\right)\otimes\left(\b{NA}\right)}{2}
    \le q(1-q)\norm{\b{NA}}{2}^2
  \end{align*}
  and
  \begin{align*}
    \norm{\Exp\left[\bc{E}\right]\otimes\Exp\left[\bc{E}\right]}{2}
    = \norm{\Exp\left[\bc{E}\right]}{2}^2
    = \norm{\b{I}-(1-q)\b{NA}}{2}^2.
  \end{align*}
  Consequently,
  \begin{align*}
    \norm{\Exp\left[\bc{E}\otimes\bc{E}\right]}{2}
    &\le \norm{\b{I}-(1-q)\b{NA}}{2}^2 + q(1-q)\norm{\b{NA}}{2}^2 \\
    &= \norm{\b{E}+q\b{NA}}{2}^2 + q(1-q)\norm{\b{NA}}{2}^2 \\
    &\le \left(\norm{\b{E}}{2}+q\norm{\b{NA}}{2}\right)^2 + q(1-q)\norm{\b{NA}}{2}^2 \\
    &= (1-q)\norm{\b{E}}{2}^2+q(\norm{\b{E}}{2}+\norm{\b{NA}}{2})^2
  \end{align*}
  and the result follows thanks to \Cref{lem:replicaTrick}.
\end{proof}

In order to illustrate \Cref{thm:smoothener}, we consider the situation
where the smoothener is taken to be a convergent relaxation scheme so that
$\norm{\b{E}^{S}_{\ell}}{2}=\gamma\in[0,1)$. The triangle inequality gives
$\norm{\b{N}_\ell \b{A}_\ell}{2}\leq 1+\gamma$ and hence, thanks to
\Cref{thm:smoothener}, we deduce that
\begin{align*}
    \varrho(\bc{E}_\ell^{S})\le\sqrt{\gamma^2 + q(1+4\gamma+3\gamma^{2})}.
\end{align*}
As a consequence, we find that a sufficient condition for the fault-prone smoothener
to remain convergent is that the error rate is sufficiently small: $q<(1-\gamma)/(1+3\gamma)$.

\section{Summary of Main Results}
\label{sec:summary-main-results}

Let the following standard assumptions for the convergence of the fault-free multigrid method be satisfied~\cite{Braess2007_FiniteElements,Hackbusch1985_MultiGridMethodsApplications,Hackbusch1994_IterativeSolutionLargeSparseSystemsEquations,TrottenbergOosterleeEtAl2001_Multigrid}:
\begin{assumption}[Smoothing property]\label{as:smoothing}
  There exists \(\eta: \mathbb{N}\rightarrow \mathbb{R}_{\geq0}\) satisfying \(\lim_{\nu\rightarrow\infty}\eta(\nu)  = 0\) and such that
  \begin{align*}
    \norm{\b{A}_{\ell}\left(\b{E}^{S}_{\ell}\right)^{\nu}}{2} & \leq \eta(\nu)\norm{\b{A}_{\ell}}{2}, \quad \nu\geq0,~\ell=1,\dots,L.
  \end{align*}
\end{assumption}

\begin{assumption}[Approximation property]\label{as:approximation}
  There exists a constant \(C_{A}\) such that
  \begin{align*}
    \norm{\b{E}^{CG}_{\ell}\b{A}_{\ell}^{-1}}{2} & \leq \frac{C_{A}}{\norm{\b{A}_{\ell}}{2}}, \quad \ell=1,\dots,L.
  \end{align*}
\end{assumption}

\begin{assumption}\label{as:smoother}
  The smoothener is non-expansive, i.e.\ \(\rho\left(\b{E}^{S}_{\ell}\right)=\norm{\b{E}^{S}_{\ell}}{A}\leq 1\), and there exists a non-increasing function \(C_{S}:\mathbb{N}\rightarrow \mathbb{R}_{\geq0}\) such that
  \begin{align*}
    \norm{\left(\b{E}^{S}_{\ell}\right)^{\nu}}{2} & \leq C_{S}\left(\nu\right), \quad \nu\geq 1,~\ell=1,\dots,L.
  \end{align*}
\end{assumption}

\begin{assumption}\label{as:prolongation}
  There exist positive constants \(\underline{C}_{p}\) and \(\overline{C}_{p}\) such that
  \begin{align*}
    \underline{C}_{p}^{-1}\norm{x_{\ell}}{2} & \leq \norm{\b{P} x_{\ell}}{2}\leq \overline{C}_{p}\norm{x_{\ell}}{2} \quad \forall x_{\ell}\in \mathbb{R}^{n_{\ell}},~\ell=0,\dots,L-1.
  \end{align*}
\end{assumption}
\Cref{as:smoothing,as:approximation} guarantee convergence of the Two Grid Method, i.e. \(\norm{\b{E}^{TG}_{\ell}}{2}\leq C<1\), whereas \cref{as:smoother,as:prolongation} can be used to show multigrid convergence with respect to \(\norm{\bullet}{2}\).

\subsection{Analysis of the Fault-Prone Two Grid Method}
\label{sec:analysis-fault-prone}

We first analyse the Fault-Prone Two Grid Method in the following setting:
Let $\Omega\subset\mathbb{R}^{d}$ be a convex polyhedral domain,
$\Gamma_D\subseteq\partial\Omega$ be non-empty and
$V=\{v\in H^{1}(\Omega):v=0\mbox{ on }\Gamma_D\}$. Let $\c{T}_{0}$ be a
partitioning of $\Omega$ into the union of elements each of which is a
$d+1$-simplex, such that he intersection of any pair of distinct
elements is a single common vertex, edge, face or sub-simplex of both
elements. For $\ell\in\NN$, the partition $\c{T}_\ell$ is obtained by
uniformly subdividing each element in $\c{T}_{\ell-1}$ into
subsimplices such that $\c{T}_\ell$ satisfies the same conditions as
$\c{T}_0$ \cite{KroegerPreusser2008_Stability8TetrahedraShortest}. Let
$\PP_k$, $k\in\NN$, consist of polynomials on $\RR^d$ of total degree
at most $k$. The subspace $V_\ell$ consists of continuous piecewise
polynomials belonging to $\PP_k$ on each element on the partition
$\c{T}_\ell$. In particular, this means that the spaces are nested:
$V_0\subset V_1\subset\ldots\subset V_L \subset V$.

Let $a:V\times V\to\RR$ be a continuous, $V$-elliptic, bilinear form and
$F:V\to\RR$ be a continuous linear form and consider the problem of obtaining
$u_L\in V_L$ such that
\begin{align}\label{eq:fem}
  a(u_{L},v) = F(v)\quad\forall v\in V_L.
\end{align}
Let $\phi_\ell^{(i)}$, $i=1,\dots,n_\ell$, denote the usual nodal basis for
$V_\ell$, and $\phi_\ell$ be the corresponding vector formed using the basis
functions. Problem \cref{eq:fem} is then equivalent to the matrix equation
\begin{align*}
  \b{A}_L x_L = F_L
\end{align*}
where $\b{A}_\ell=a(\phi_\ell,\phi_\ell)$ is the stiffness matrix,
\(x_{L}\) is the coefficient vector of \(u_{\ell}\) and
$F_L = F(\phi_L)$ is the load vector. For future reference, we
define the mass matrix by $\b{M}_\ell=(\phi_\ell,\phi_\ell)$. The
nesting of the spaces $V_\ell\subset V_{\ell+1}$ means that there
exists a matrix $\b{R}^\ell_{\ell+1}$ such that
$\phi_\ell = \b{R}^\ell_{\ell+1}\phi_{\ell+1}$, i.e. the restriction
matrix with the associated prolongation matrix $\b{P}^{\ell+1}_\ell$
taken to be the transpose of $\b{R}^\ell_{\ell+1}$.

The first main result of the present work concerns the convergence of
the Fault-Prone Two-Grid Algorithm for the special case of all
\(\bc{X}_{l}^{(\bullet)}\) having independent Ber\-noulli distributed
diagonal entries with probability of fault \(q\) (which satisfies \Cref{as:faults} with \(\varepsilon=q\)):

\begin{restatable}{theorem}{twogrid}\label{thm:perturbedTGconv}
  Let \(\bc{E}^{TG}_{L} = \left(\bc{E}^{S,\text{post}}_{L}\right)^{\nu_{2}} \bc{E}^{CG}_{L} \left(\bc{E}^{S,\text{pre}}_{L}\right)^{\nu_{1}}\)
  be the iteration matrix of the Fault-Prone Two Grid Method with Jacobi smoothener and component-wise faults of rate \(q\) in prolongation, restriction, residual and in the smoothener, and let \(\b{E}^{TG}_{L}\) be its fault-free equivalent.
  Assume that the \Cref{as:smoothing,as:approximation,as:smoother,as:prolongation} hold.
  Then
  \begin{align}
    \varrho \left(\bc{E}^{TG}_{L}\right) & \leq \norm{\b{E}^{TG}_{L}}{A} +
                                       C\begin{cases}
                                         qn_{L}^{(4-d)/2d} & d < 4, \\
                                         q\sqrt{\log n_{L}} & d=4, \\
                                         q & d > 4,
                                       \end{cases} \label{eq:perturbedTGconv}
  \end{align}
  where \(\norm{\bullet}{A}\) is the matrix energy norm and the constant \(C\) is independent of \(q\) and \(L\).
\end{restatable}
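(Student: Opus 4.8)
The plan is to apply the Replica Trick (\Cref{lem:replicaTrick}) to $\bc{E}^{TG}_L$, so that the task reduces to bounding $\rho(\Exp[\bc{E}^{TG}_L\otimes\bc{E}^{TG}_L])$. Writing $\bc{E}^{TG}_L = \left(\bc{E}^{S,\text{post}}_L\right)^{\nu_2}\bc{E}^{CG}_L\left(\bc{E}^{S,\text{pre}}_L\right)^{\nu_1}$, the first move is to take the Kronecker square and use that the pre-smoothing block, the coarse-grid block, and the post-smoothing block involve \emph{independent} realisations of the random diagonal matrices; hence the expectation factors as $\Exp[(\bc{E}^{S,\text{post}}_L)^{\nu_2}\otimes(\bc{E}^{S,\text{post}}_L)^{\nu_2}]\cdot\Exp[\bc{E}^{CG}_L\otimes\bc{E}^{CG}_L]\cdot\Exp[(\bc{E}^{S,\text{pre}}_L)^{\nu_1}\otimes(\bc{E}^{S,\text{pre}}_L)^{\nu_1}]$. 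For the spectral radius of this product I would pass to the operator norm induced by the energy inner product on $\RR^{n_L}\otimes\RR^{n_L}$ (i.e.\ the norm associated with $\b{A}_L\otimes\b{A}_L$), which controls $\rho$ from above and in which the fault-free two-grid operator is a contraction with constant $\norm{\b{E}^{TG}_L}{A}$.

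The key decomposition is to split each Kronecker-squared block into its fault-free part plus a fluctuation: for the smoothener this is precisely \cref{eq:oneRandomMatrix}, namely $\Exp[\bc{E}\otimes\bc{E}] = \Exp[\bc{E}]\otimes\Exp[\bc{E}] + \Var[\bc{B}]$ with $\norm{\Var[\bc{B}]}{2}\le q(1-q)\norm{\b{N}\b{A}}{2}^2$, and analogously for the coarse-grid block one expands $\bc{E}^{CG}_L = \b{I} - \bc{X}^{(P)}_L\b{P}\b{A}_{L-1}^{-1}\bc{X}^{(R)}_{L-1}\b{R}\bc{X}^{(\rho)}_L\b{A}_L$ and collects the $O(q)$ deviation of $\Exp[\bc{E}^{CG}_L\otimes\bc{E}^{CG}_L]$ from $\b{E}^{CG}_L\otimes\b{E}^{CG}_L$. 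Multiplying the three blocks, the fault-free cross term gives $\b{E}^{TG}_L\otimes\b{E}^{TG}_L$, whose spectral radius is $\norm{\b{E}^{TG}_L}{A}^2$; every remaining term carries at least one variance/deviation factor of size $O(q)$ times norms of the form $\norm{\b{N}_L\b{A}_L}{2}$, $\norm{\b{A}_L^{1/2}\b{P}\b{A}_{L-1}^{-1}\b{R}\b{A}_L^{1/2}}{2}$ and $\norm{(\b{E}^S_L)^\nu}{2}$, which are bounded by \Cref{as:smoothing,as:approximation,as:smoother,as:prolongation} independently of $L$. Taking the square root via \Cref{lem:replicaTrick} and using $\sqrt{a^2+b}\le a+\sqrt{b}$ (or $\le a + b/(2a)$ when $a$ is bounded below) converts the additive $O(q)$ bound on the Kronecker square into the additive $O(q\cdot(\text{dimension factor}))$ bound claimed in \cref{eq:perturbedTGconv}.

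The main obstacle — and the place where the dimension-dependent factor $n_L^{(4-d)/2d}$, $\sqrt{\log n_L}$, or $1$ enters — is estimating the norm of the fluctuation terms in the $2$-norm rather than the energy norm, because the laissez-faire random matrices $\bc{X}^{(\bullet)}$ act in the nodal basis and are not self-adjoint in the energy inner product. Concretely, the variance term $\b{K}(\b{N}_L\b{A}_L)\otimes(\b{N}_L\b{A}_L)$ from the smoothener, and the corresponding restriction/prolongation fluctuations, must be bounded after conjugation by $\b{A}_L^{1/2}\otimes\b{A}_L^{1/2}$; this introduces a factor measuring the discrepancy between the $2$-norm and the energy norm on the relevant spaces, i.e.\ essentially $\kappa(\b{A}_L)^{1/2}$-type quantities, or more sharply the norm of $\b{A}_L^{1/2}$ times a single canonical projector. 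Using the finite-element facts that $\norm{\b{A}_\ell}{2}\sim h_\ell^{d-2}$ and $\b{M}_\ell\sim h_\ell^d\b{I}$ on a quasi-uniform mesh, together with $n_\ell\sim h_\ell^{-d}$, one gets that each stray canonical-unit-vector factor $\vec{e}^{\,(j)}$ contributes a power of $h_L$ that, summed over the $n_L$ diagonal blocks in $\b{K}$ (or estimated by the worst block), produces exactly the three regimes: for $d<4$ the mismatch grows like $n_L^{(4-d)/2d}$, for $d=4$ it is the borderline logarithmic case, and for $d>4$ it is uniformly bounded. Carefully tracking which norm each random factor must be measured in, and extracting the sharp mesh-dependence from $\norm{\b{A}_L^{1/2}\,\text{diag}}{2}$-type expressions, is the technical heart of the argument; everything else is bookkeeping with the recursion \cref{eq:twolevel} and the assumptions.
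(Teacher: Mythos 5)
Your overall architecture matches the paper's: replica trick, passage to the energy norm on the tensor space, splitting each Kronecker-squared block into its mean squared plus variance terms (the paper's \Cref{lem:prodPert}), identifying the fault-free product with \(\norm{\b{E}^{TG}_{L}}{A}^{2}\), and converting the additive \(O(q)\) perturbation of the squared quantity into an additive \(O(q)\) perturbation of the rate via \(\sqrt{a^{2}+b}\leq a+\sqrt{b}\). Up to that point you are reconstructing the paper's proof.

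However, there is a genuine gap precisely at what you yourself call the technical heart: your proposed mechanism for the dimension-dependent factor does not deliver the claimed trichotomy. Bounding the conjugated fluctuation terms by ``\(\kappa(\b{A}_{L})^{1/2}\)-type quantities'' or by norms of \(\b{A}_{L}^{1/2}\) against canonical vectors only ever produces pure powers of \(h_{L}\) (e.g. \(\kappa(\b{A}_{L})^{1/2}\sim h_{L}^{-1}\sim n_{L}^{1/d}\) independently of \(d\)); such bounds cannot generate the \(\sqrt{\log n_{L}}\) at \(d=4\), they fail to give uniform boundedness for \(d>4\), and for \(d=3\) they give \(n_{L}^{1/3}\) instead of the stated \(n_{L}^{1/6}\). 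What the paper actually does is reduce every variance term, via the identity \(\b{K}\b{Z}^{\otimes2}\b{K}\cong\b{Z}^{\circ2}\) and the Schur-product bound of \Cref{lem:hadamardEst2}, to the two quantities \(\norm{\b{A}}{2}\leq Ch^{d-2}\) and \(\max_{i}\norm{\b{A}^{-1}\vec{e}^{\,(i)}}{2}\), i.e. to column norms of \(\b{A}^{-1}\); the latter is a \emph{discrete Green's function} estimate, proved in \Cref{lem:Ainvbound} by writing \(\b{A}^{-1}\vec{e}^{\,(i)}\) as the coefficient vector of \(u_{L}\) solving \(a(u_{L},v)=v(\vec{x}_{i})\), rescaling through the mass matrix (\(\norm{\b{A}^{-1}\vec{e}^{\,(i)}}{2}\lesssim h^{-d/2}\norm{u_{L}}{L^{2}}\)), and then invoking Aubin--Nitsche duality, \(H^{2}\)-regularity on the convex domain, and \(H^{-2}\)-norm estimates for nodal basis functions. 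The threshold at \(d=4\) is exactly the square-integrability threshold of the Green's function \(|x-y|^{2-d}\), which is a PDE fact, not a norm-equivalence fact; without an argument of this type (or an equivalent bound on \(\max_{i}\norm{\b{A}^{-1}\vec{e}^{\,(i)}}{2}\)), your sketch establishes a bound with the wrong, strictly weaker, dimension dependence and hence does not prove \cref{eq:perturbedTGconv}.
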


The proof of \Cref{thm:perturbedTGconv} is postponed to \Cref{sec:proof-TGconv}.
If the probability of a fault occurring is zero, i.e. $q=0$, then the
bound~\cref{eq:perturbedTGconv} reduces to the quantity $\norm{\b{E}^{TG}_{L}}{A}$
which governs the convergence of the fault-free Two Grid Algorithm. Under the
foregoing assumptions on the finite element discretization, it is known
\cite{Bramble1993_MultigridMethods,
Hackbusch1985_MultiGridMethodsApplications,
TrottenbergOosterleeEtAl2001_Multigrid,
Braess2007_FiniteElements}
that $\norm{\b{E}^{TG}_{L}}{A}\le C_{TG}<1$, where $C_{TG}$ is a
positive constant independent of $L$.  If $q\in(0,1)$, then the
bound~\cref{eq:perturbedTGconv} depends on the failure probability $q$, the number of
unknowns $n_{L}$ and the spatial dimension $d$ of the underlying discretization.

\begin{wrapfigure}{i}{0pt}
  \centering
  \includegraphics{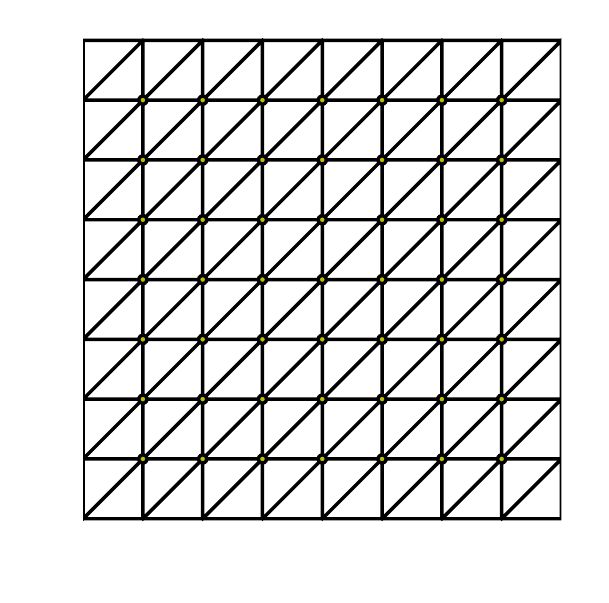}
  \caption{Uniform mesh for the square domain.}\label{fig:poisson2d-mesh2}
\end{wrapfigure}

In the simplest case, in which the spatial dimension $d>4$, the Fault-Prone Two
Grid Method will converge uniformly in $n_{L}$ provided that the failure
probability $q$ is sufficiently small, i.e. $q\in[0,(1-C_{TG})/C)\subset [0,1)$.
The estimate in the cases of most practical interest, in which $d<4$, is less
encouraging with the bound depending on the size $n_{L}$ of the problem being
solved. In particular, the estimate means that no matter how small the probability $q$
of a failure may be, as the size of the problem being solved
grows, the bound will exceed unity suggesting that the Two Grid Method will
fail to converge \emph{at all}.

\begin{figure}
  \centering
  \includegraphics{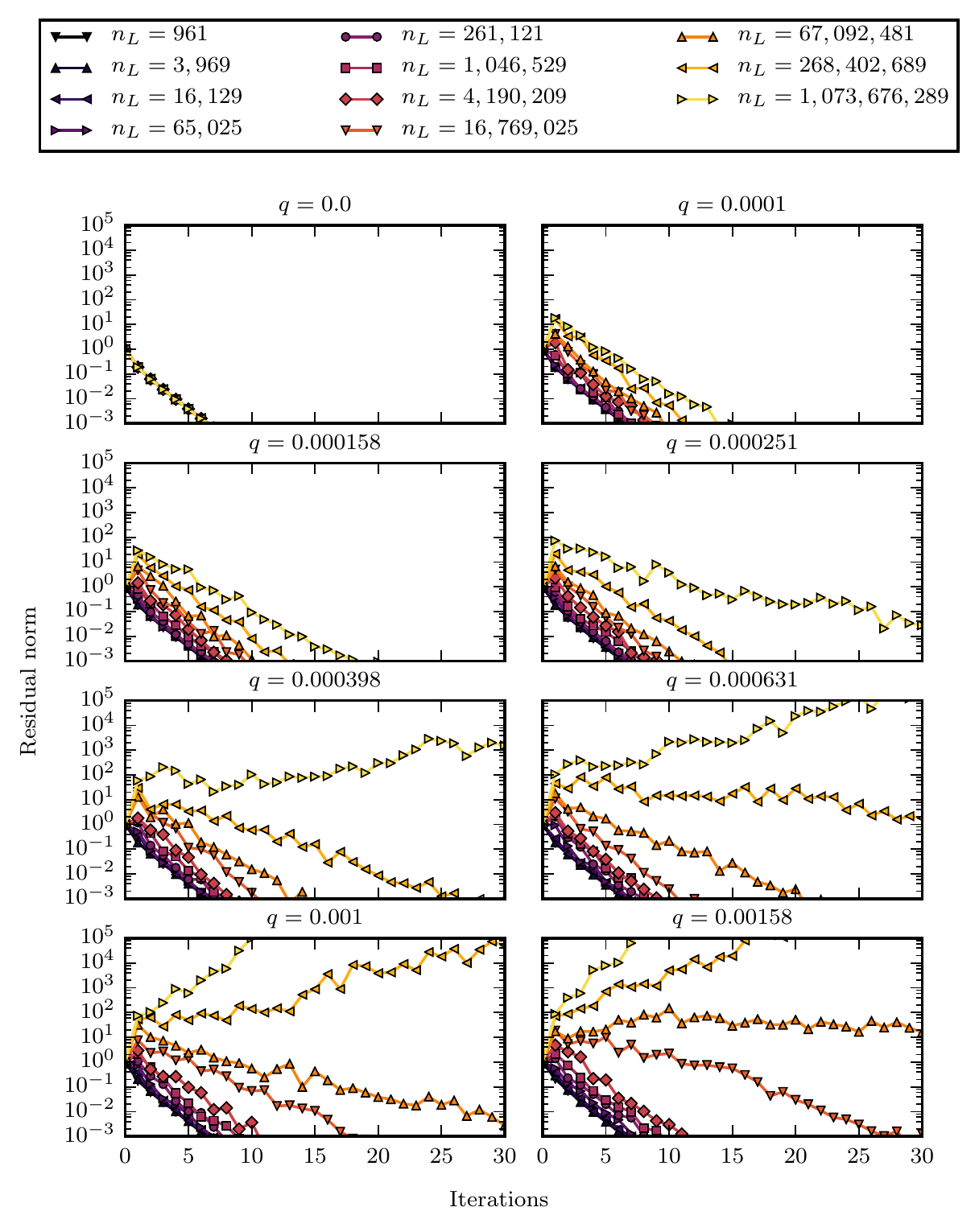}
  \caption{Plot of the norm of the residual against iteration number for the
    Fault-Prone Two Grid Method in the case of discretization of the Poisson problem
    on a square domain.} \label{fig:poisson2dgeoTG-residuals}
\end{figure}
In order to illustrate this behaviour and to test the sharpness of the
estimate~\cref{eq:perturbedTGconv}, we apply the Fault-Prone Two Grid
Method to the system arising from a discretization of the Poisson problem on a square
using piecewise linears on a uniform mesh of triangles (\Cref{fig:poisson2d-mesh2}).
In \Cref{fig:poisson2dgeoTG-residuals} we plot the norm of the residual
after each iteration for a range of
values of failure probability $q$ and system sizes $n_L$ ranging from 1
thousand to 1 billion. These computations were performed on the Titan
supercomputer located at Oak Ridge National Laboratory.


It is observed that when $q=0$, the method reaches residual norm
\(10^{-3}\) in about 7 iterations uniformly for all values of \(n_{L}\) up to
around one billion. However, as $q$ grows, it is observed that
the convergence deteriorates in all cases, with the deterioration
being more and more severe as the number of unknowns is increased, until,
eventually, the method fails to converge at all.

The fact that the Two Grid Method fails to converge even for such a
simple classical problem means one cannot expect a more favourable
behaviour for realistic practical problems.

In order to verify the scaling law suggested
by~\cref{eq:perturbedTGconv} we examine the case \(d=2\) further. In
particular, when $d=2$, we expect convergence to fail when
$q\sqrt{n_{L}}$ exceeds some threshold.
\Cref{fig:poisson2dgeoTG-qd-rhoL} shows a contour plot of the
variation of the Lyapunov spectral radius (obtained by 1000 iterations
of the Fault-Prone Two Grid Method) with respect to the failure probability $q$
and the size $n_{L}$ of the system. The plot indicates the boundary of
the region in which the Lyapunov spectral radius exceeds unity, or
equally, the region in which the fault-prone iteration no longer
converges at all. It will be observed that the lines on which
$q\sqrt{n_{L}}$ remains constant (also indicated in the plot) coincide
with the contours of the plot in the region in which failure to
converge occurs. This supports the scaling law suggested in the
estimate~\cref{eq:perturbedTGconv}.

\begin{figure}
  \centering
  \includegraphics{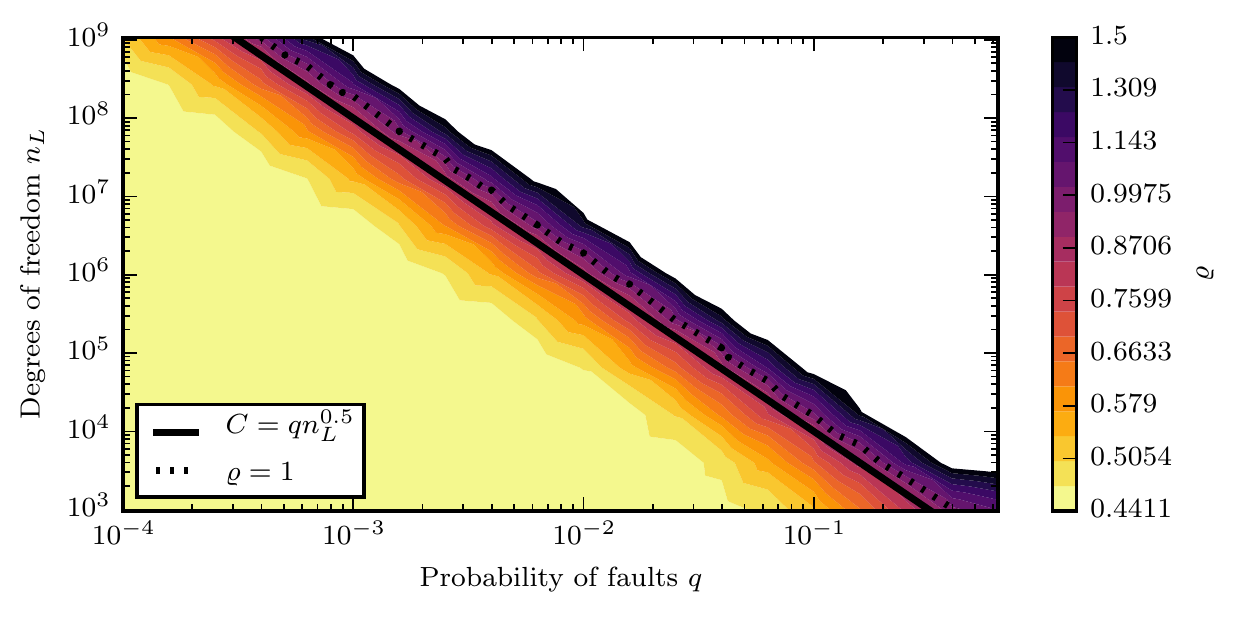}
  \caption{Lyapunov spectral radius $\varrho(\bc{E}^{TG}_L)$ for the iteration
    matrix for the Fault-Prone Two Grid Method in the case of discretization
    of the Poisson problem on a square domain.
  }\label{fig:poisson2dgeoTG-qd-rhoL}
\end{figure}

\subsection{Analysis of the Fault-Prone Two Grid Method with Minimal Protection}
\label{sec:analysis-fault-prone-1}

\Cref{thm:perturbedTGconv} indicates that the Fault-Prone Two Grid Method will generally not be resilient to faults.
What remedial action, in addition to laissez-faire, is needed to restore the uniform convergence of the Fault-Prone Multigrid Method to that of the fault-free scheme?
\Cref{thm:TGNoProlong} states that if the \emph{prolongation} operation is protected, meaning that \(\bc{X}_{L}^{(P)}=\b{I}\), then the rate of convergence of the Two Grid Method is independent of the number of unknowns and close to the rate of the fault-free method.

\begin{restatable}{theorem}{TGNoProlong}\label{thm:TGNoProlong}
  Let \(\bc{E}^{TG}_{L}\left(\nu_{1},\nu_{2}\right) = \left(\bc{E}^{S,\text{post}}_{L}\right)^{\nu_{2}} \bc{E}^{CG}_{L} \left(\bc{E}^{S,\text{pre}}_{L}\right)^{\nu_{1}}\)
  be the iteration matrix of the Fault-Prone Two Grid Method with faults in smoothener, residual and restriction, and protected prolongation.
  Provided \Cref{as:faults,as:smoothing,as:approximation,as:smoother,as:prolongation} hold, and that \(\b{N}_{L}\) and \(\bc{X}^{(S)}_{L}\) commute, we find that
  \begin{align*}
    \varrho \left(\bc{E}^{TG}_{L}\left(\nu_{1},\nu_{2}\right)\right) & \leq \min_{\substack{\mu_{1}+\mu_{2}=\nu_{1}+\nu_{2} \\\mu_{1},\mu_{2}\geq 0}}\norm{\b{E}^{TG}_{L}\left(\mu_{1},\mu_{2}\right)}{2} + C\varepsilon,
  \end{align*}
  where the constant \(C\) is independent of \(\varepsilon\) and \(L\).
\end{restatable}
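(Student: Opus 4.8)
The plan is to apply the Replica Trick, \Cref{lem:replicaTrick}, to the random iteration matrix, to decompose the Kronecker expectation into a ``mean'' part and a ``fluctuation'' part, and to show that the fluctuation is $O(\varepsilon)$ \emph{uniformly in $L$} — it is here that protecting the prolongation is decisive. First, though, I would reduce to a convenient splitting of the smoothing sweeps: writing $\bigl(\bc{E}^{TG}_L(\nu_1,\nu_2)\bigr)^N$ as a product of $N(\nu_1+\nu_2)$ independent realisations of the one-step random smoother $\bc{E}^{S}_L$ and $N$ independent realisations of $\bc{E}^{CG}_L$ arranged periodically, a cyclic shift of the period turns $\bc{E}^{TG}_L(\nu_1,\nu_2)$ into $\bc{E}^{TG}_L(\mu_1,\mu_2)$ for any $\mu_1+\mu_2=\nu_1+\nu_2$. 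Since the Lyapunov spectral radius of a product of an independent, periodically distributed family of factors does not depend on where the period begins (subadditive ergodic theorem), $\varrho\bigl(\bc{E}^{TG}_L(\mu_1,\mu_2)\bigr)$ is the same for every admissible $(\mu_1,\mu_2)$; it therefore suffices to bound it for one essentially balanced splitting and then minimise the resulting right-hand side over $(\mu_1,\mu_2)$.

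Fix such a splitting $(\mu_1,\mu_2)$. \Cref{lem:replicaTrick} gives $\varrho(\bc{E}^{TG}_L)^2\le\rho\bigl(\Exp[\bc{E}^{TG}_L\otimes\bc{E}^{TG}_L]\bigr)$, and I would estimate the right-hand side in the operator norm $\norm{\bullet}{A\otimes A}$ induced on $\RR^{n_L}\otimes\RR^{n_L}$ by the $\b{A}_L$-energy inner product on each factor — it is multiplicative on Kronecker products and bounds $\rho(\bullet)$. Factorising $\Exp[\bc{E}^{TG}_L\otimes\bc{E}^{TG}_L]$ over the three mutually independent blocks $(\bc{E}^{S,\text{post}}_L)^{\mu_2}$, $\bc{E}^{CG}_L=\b{I}-\b{P}\b{A}_{L-1}^{-1}\bc{X}^{(R)}_{L-1}\b{R}\bc{X}^{(\rho)}_L\b{A}_L$ (recall $\bc{X}^{(P)}_L=\b{I}$), $(\bc{E}^{S,\text{pre}}_L)^{\mu_1}$, and applying the identity \cref{eq:oneRandomMatrix} blockwise, one obtains
\begin{align*}
  \Exp[\bc{E}^{TG}_L\otimes\bc{E}^{TG}_L]=\bigl(\b{S}\otimes\b{S}+\b{V}_S\bigr)^{\mu_2}\bigl(\b{C}_0\otimes\b{C}_0+\b{V}_{CG}\bigr)\bigl(\b{S}\otimes\b{S}+\b{V}_S\bigr)^{\mu_1},
\end{align*}
where $\b{S}=\Exp[\bc{E}^{S}_L]$, $\b{C}_0=\Exp[\bc{E}^{CG}_L]$, $\b{V}_S=\Var[\bc{X}^{(S)}_L\b{N}_L\b{A}_L]$ is the smoothener fluctuation appearing in \Cref{thm:smoothener}, and $\b{V}_{CG}$ collects the variances of $\bc{X}^{(\rho)}_L$ and $\bc{X}^{(R)}_{L-1}$. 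Peeling off the unique all-mean term yields $\rho(\Exp[\bc{E}^{TG}_L\otimes\bc{E}^{TG}_L])\le\norm{\Exp[\bc{E}^{TG}_L]}{A}^2+R$ with $R$ a finite sum of mixed terms, each carrying at least one $\b{V}$ factor.

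The mean term is straightforward: by \Cref{as:faults}, $\b{S}$ and $\b{C}_0$ differ from $\b{E}^{S}_L=\b{I}-\b{N}_L\b{A}_L$ and $\b{E}^{CG}_L=\b{I}-\b{P}\b{A}_{L-1}^{-1}\b{R}\b{A}_L$ only by bounded corrections of size $O(\varepsilon)$; $\b{E}^{S}_L$ is $\b{A}_L$-self-adjoint (as for Jacobi, where the hypothesis that $\b{N}_L$ commutes with $\bc{X}^{(S)}_L$ is automatic, and which in any case underlies the reduction of \Cref{sec:smoothening}) and non-expansive in $\norm{\bullet}{A}$ by \Cref{as:smoother}; and $\b{E}^{CG}_L$ is, by $\b{R}=\b{P}^T$ and $\b{R}\b{A}_L\b{P}=\b{A}_{L-1}$, the $\b{A}_L$-orthogonal projection onto the $\b{A}_L$-orthogonal complement of the range of $\b{P}$, hence idempotent with $\norm{\b{E}^{CG}_L}{A}\le1$. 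Therefore $\norm{\Exp[\bc{E}^{TG}_L]}{A}\le\norm{\b{E}^{TG}_L(\mu_1,\mu_2)}{A}+C\varepsilon$. For the balanced splitting, $\b{E}^{TG}_L(\mu_1,\mu_2)$ equals the $\b{A}_L$-self-adjoint operator $(\b{E}^{S}_L)^{m}\b{E}^{CG}_L(\b{E}^{S}_L)^{m}$, $m=\lfloor(\nu_1+\nu_2)/2\rfloor$, up to at most one extra non-expansive factor $\b{E}^{S}_L$, so $\norm{\b{E}^{TG}_L(\mu_1,\mu_2)}{A}\le\rho\bigl((\b{E}^{S}_L)^{m}\b{E}^{CG}_L(\b{E}^{S}_L)^{m}\bigr)=\rho(\b{E}^{TG}_L)$; since the ordinary spectral radius is splitting-independent and dominated by every induced norm, $\rho(\b{E}^{TG}_L)\le\min_{\mu_1+\mu_2=\nu_1+\nu_2}\norm{\b{E}^{TG}_L(\mu_1,\mu_2)}{2}$.

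The main obstacle is the bound $R\le C\varepsilon$ with a constant independent of $L$; this is precisely where $\bc{X}^{(P)}_L=\b{I}$ cannot be dispensed with. Each summand of $R$ carries a Kronecker variance of $\bc{X}^{(S)}_L$, $\bc{X}^{(\rho)}_L$ or $\bc{X}^{(R)}_{L-1}$, and in $\b{V}_S$, $\b{V}_{CG}$ these variances always appear \emph{flanked} by the coarse-grid-correction operators and an adjacent smoothing block — in combinations of the shape $\b{P}\b{A}_{L-1}^{-1}\b{R}\,\b{K}\,\b{A}_L(\b{E}^{S}_L)^{\nu}$ and $\b{A}_L(\b{E}^{S}_L)^{\nu}\,\b{K}\,(\cdots)$, with $\b{K}$ the diagonal-extraction matrix of the proof of \Cref{thm:smoothener}. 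The intended mechanism is that the smoothing property \Cref{as:smoothing}, $\norm{\b{A}_L(\b{E}^{S}_L)^{\nu}}{2}\le\eta(\nu)\norm{\b{A}_L}{2}$, and the approximation property \Cref{as:approximation}, $\norm{\b{E}^{CG}_L\b{A}_L^{-1}}{2}\le C_A/\norm{\b{A}_L}{2}$, together with $\norm{\b{K}\b{C}}{2}\le\norm{\b{C}}{2}$ and the prolongation stability \Cref{as:prolongation}, make the otherwise dangerous powers of $\norm{\b{A}_L}{2}$ cancel against those of $\norm{\b{A}_L^{-1}}{2}$, so that $R=O(\varepsilon)$ uniformly in $L$ (the surrounding mean factors $\b{S}\otimes\b{S}$, $\b{C}_0\otimes\b{C}_0$ contributing only factors $\le1+C\varepsilon$). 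For the \emph{unprotected} method of \Cref{thm:perturbedTGconv} this cancellation fails: the variance of $\bc{X}^{(P)}_L$ produces a term in which a \emph{fine-level} diagonal extraction acts with no shielding smoothing step, leaving uncancelled powers of the condition number $\norm{\b{A}_L}{2}\norm{\b{A}_L^{-1}}{2}$ and hence the mesh-dependent factor $n_L^{(4-d)/2d}$. Finally, collecting the estimates and taking the square root of $\norm{\Exp[\bc{E}^{TG}_L]}{A}^2+C\varepsilon$ — the lower-order term being absorbed using that the relevant norms are bounded above and below — gives $\varrho(\bc{E}^{TG}_L(\mu_1,\mu_2))\le\norm{\b{E}^{TG}_L(\mu_1,\mu_2)}{A}+C\varepsilon$; by the cyclic invariance the left-hand side equals $\varrho(\bc{E}^{TG}_L(\nu_1,\nu_2))$, and, by the previous paragraph, the right-hand side is at most $\min_{\mu_1+\mu_2=\nu_1+\nu_2}\norm{\b{E}^{TG}_L(\mu_1,\mu_2)}{2}+C\varepsilon$, which is the assertion.
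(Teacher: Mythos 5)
Your overall skeleton matches the paper's: replica trick, expansion of \(\Exp[(\bc{E}^{TG})^{\otimes2}]\) into the squared mean plus variance terms (the paper's \Cref{lem:prodPert} and \cref{eq:sumCG,eq:sumSmoothener}), and invariance of the Lyapunov spectral radius under cyclic shifts of the smoothing split to obtain the minimum over \(\mu_1+\mu_2=\nu_1+\nu_2\). But the decisive step — the uniform-in-\(L\) bound \(R\le C\varepsilon\) on the fluctuation — is exactly where your plan has a gap. You propose to estimate in the tensor energy norm \(\norm{\bullet}{A}\) and to get the cancellation of \(\norm{\b{A}_L}{2}\) from the smoothing property, with the variances "shielded" by adjacent smoothing blocks. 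This mechanism does not work. In the energy norm, the coarse-grid variance terms have the form \(\b{C}^{(\rho)}=(\b{P}\b{A}_C^{-1}\b{R})^{\otimes2}\,\b{V}^{(\rho)}\,\b{A}^{\otimes2}\) (similarly \(\b{C}^{(R)},\b{C}^{(R,\rho)}\)), and the diagonal variance matrix sits \emph{between} \((\b{P}\b{A}_C^{-1}\b{R})^{\otimes2}\) and \(\b{A}^{\otimes2}\), so the approximation property cannot be applied across it; conjugating by \((\b{A}^{1/2})^{\otimes2}\) leaves you with \(\norm{\b{C}^{(\rho)}}{A}\lesssim\varepsilon\,\norm{\b{P}\b{A}_C^{-1}\b{R}}{2}\norm{\b{A}}{2}\sim\varepsilon\,\kappa(\b{A})\), and the smoothing property does not rescue this because \(\norm{\b{A}(\b{E}^S)^{\nu}}{2}\le\eta(\nu)\norm{\b{A}}{2}\) still carries the uncancelled factor \(\norm{\b{A}}{2}\) (and when \(\mu_1=0\) or \(\mu_2=0\), which the statement allows, there is no flanking smoothener at all). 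Indeed, these are precisely the same terms that the paper's proof of \Cref{thm:perturbedTGconv} bounds in the energy norm, and there they genuinely scale like \(h^{(d-4)/2}\) — the non-uniformity in Theorem 3 is \emph{not} caused solely by the prolongation variance, contrary to your reading.

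What the paper actually does, and what your proposal is missing, is to measure everything in the \emph{double} energy norm \(\norm{\b{Z}}{A^2}=\norm{\b{A}\b{Z}\b{A}^{-1}}{2}\) (equivalently, to pass to transposes). Conjugation by \(\b{A}^{\otimes2}\) re-associates the stiffness matrix with the coarse-grid solve, so each surviving variance term contains the combination \(\b{A}\b{P}\b{A}_C^{-1}\b{R}\), which \Cref{as:approximation} bounds by \(1+C_A\) independently of \(L\); the prolongation variance is the one term for which this re-association is blocked, which is the precise sense in which protecting the prolongation is decisive. Likewise the smoothener term becomes \(\norm{(\b{A}\b{N})^{\otimes2}\b{V}^{(S)}}{2}\le\varepsilon\norm{\b{A}\b{N}}{2}^2\) — and this is where the hypothesis that \(\b{N}_L\) and \(\bc{X}^{(S)}_L\) commute is actually used (it is not merely an artefact of the Jacobi reduction, since \Cref{thm:TGNoProlong} is not restricted to Jacobi). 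Since \(\norm{\b{Z}}{A^2}=\norm{\b{Z}^T}{2}\) for the matrices involved, this norm also yields the final bound directly in \(\norm{\b{E}^{TG}(\nu_2,\nu_1)}{2}\), with the swap of \((\nu_1,\nu_2)\) absorbed by the cyclic invariance you already invoke. Your treatment of the mean term and the final minimisation over splittings is essentially sound, but without the change of norm (or an equivalent re-association argument) the central estimate fails, so the proof as proposed does not go through.
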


The proof of \Cref{thm:TGNoProlong} will be found in \Cref{sec:proof-TGNoProlong}.
\Cref{thm:TGNoProlong} makes no additional assumptions about the origin of the problem or the solver hierarchy, and allows for more general fault patterns and smootheners than \Cref{thm:perturbedTGconv} and as such, is more generally applicable.

We apply the Fault-Prone Two Grid Method incorporating protection of the prolongation to the same test problem as above.
In \Cref{fig:poisson2dgeoTGNoProlong-residuals} we plot the evolution of the residual norm for varying system sizes and fault rates, and in \Cref{fig:poisson2dgeoTGNoProlong-qd-rhoL} we plot the Lyapunov spectral radius of the iteration matrix.
Observe that the rate of convergence is independent of the number of unknowns and even stays below unity (indicating convergence) for fault probabilities as high as \(q\approx 0.6\).
Moreover, \Cref{fig:poisson2dgeoTGNoProlong-residuals} seems to indicate that while the asymptotic behaviour is independent of the number of unknowns, the first iteration is not.

In practice, protection of the prolongation might be achieved using standard techniques, such as replication or checkpointing.
Since the prolongation is a local operation, its protected application could be overlapped with post-smoothing to minimize the overall performance penalty.
Alternatively, some developers have proposed architectures on which certain components are less susceptible to faults.
In such a case, the prolongation operator would be a candidate for this treatment.

\begin{figure}
  \centering
  \includegraphics{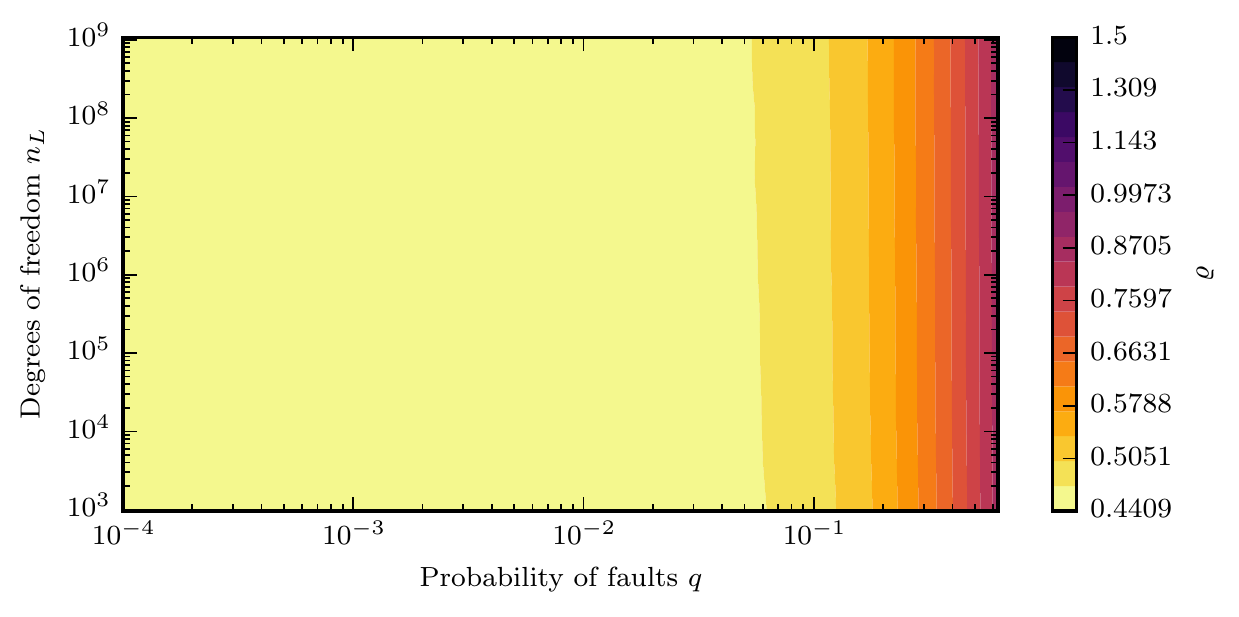}
  \caption{Lyapunov spectral radius $\varrho(\bc{E}^{TG}_L)$ for the
    iteration matrix for the Fault-Prone Two Grid Method with protected
    prolongation in the case of discretization of Poisson problem on a
    square domain.
  }\label{fig:poisson2dgeoTGNoProlong-qd-rhoL}
\end{figure}

\begin{figure}
  \centering
  \includegraphics{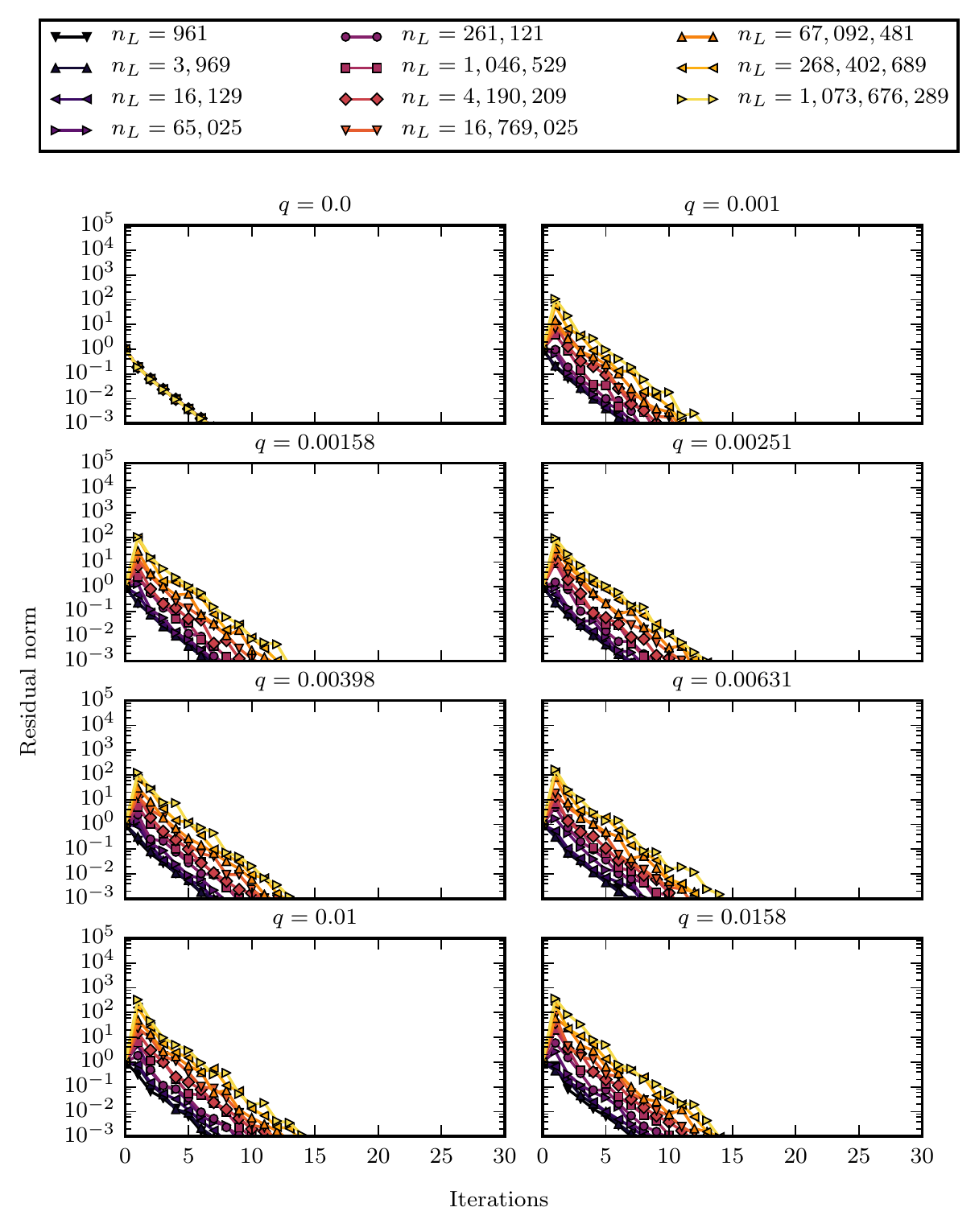}
  \caption{Plot of the norm of the residual against iteration number
    for the Fault-Prone Two Grid Method with protected prolongation in
    the case of discretization of Poisson problem on square domain.} \label{fig:poisson2dgeoTGNoProlong-residuals}
\end{figure}

In the present work we have proposed a simple approach to fault
mitigation for iterative methods. Analysis and numerical examples
showed that the Two Grid Method is not fault resilient no matter how
small the error rate \(q\) may be. Moreover, we addressed the question
of what is the minimal action required to restore the convergence of
the algorithm. We showed that this shortcoming can be overcome by
protection of the prolongation operation using standard techniques.
Forthcoming work will address these issues in the case of the
multigrid method.

\appendix

\section{Preliminaries}
\label{sec:preliminaries}

Throughout the appendices, \(C\) will be a generic constant whose value can change from line to line, but which is independent of \(\ell\), \(n_{\ell}\), \(q\) and \(\varepsilon\).

\begin{definition}
  Let \(\b{Z},~\b{Y}\in \mathbb{R}^{n \times m}\), \(n,~m\in\mathbb{N}\).
  The \emph{Kronecker product} \(\b{Z}\otimes \b{Y}\in\mathbb{R}^{n^{2}\times m^{2}}\) and the \emph{elementwise} or \emph{Hadamard product} \(\b{Z}\circ \b{Y}\in\mathbb{R}^{n\times m}\) of \(\b{Z}\) and \(\b{Y}\) are defined as
  \begin{align*}
    \left(\b{Z}\otimes \b{Y}\right)_{in+p,jm+q} &= \b{Z}_{ij} \b{Y}_{pq},
    & \left(\b{Z}\circ \b{Y}\right)_{ij} &= \b{Z}_{ij} \b{Y}_{ij}
  \end{align*}
  for \(1\leq i,p\leq n\), \(1\leq j,q\leq m\), and the \(k\)-th \emph{Kronecker power} \(\b{Z}^{\otimes k}\in\mathbb{R}^{n^{k}\times m^{k}}\) and the \(k\)-th \emph{Hadamard power} \(\b{Z}^{\circ k}\in\mathbb{R}^{n\times m}\) of \(\b{Z}\) as
  \begin{align*}
    \b{Z}^{\otimes k} &:= \underbrace{\b{Z}\otimes \b{Z} \otimes \cdots \otimes \b{Z}}_{k \text{ times}},
    & \b{Z}^{\circ k} &:= \underbrace{\b{Z}\circ \b{Z} \circ \cdots \circ \b{Z}}_{k \text{ times}}.
  \end{align*}
\end{definition}

\replicaTrickLemma*
\begin{proof}
  Let \(\b{E}\) be a non-random square matrix, then
  \begin{align*}
    \norm{\b{E}}{F}^{2}
    & = \operatorname{tr}\left(\b{EE}^{T}\right)
     = \operatorname{vec}\left(\b{I}\right) \cdot \operatorname{vec}\left(\b{EE}^{T}\right) \\
    & = \operatorname{vec}\left(\b{I}\right) \cdot \operatorname{vec}\left(\b{E}\b{I}\b{E}^{T}\right)
     = \operatorname{vec}\left(\b{I}\right) \cdot \b{E}^{\otimes2} \operatorname{vec}\left(\b{I}\right),
  \end{align*}
  where \(\norm{\bullet}{F}\) denotes the Frobenius norm and \(\operatorname{vec}\left(\bullet\right)\) denotes the vectorization of a matrix by stacking its columns.
  Hence, if \(\lambda_{j,\otimes 2}\) and \(\vec{v}_{j,\otimes 2}\) denote the eigenvalues and eigenvectors of \(\Exp \left[\bc{E}^{\otimes2}\right]\), sorted in descending order with respect to their absolute value, we have
  \begin{align*}
    \Exp \left[\norm{\bc{E}^{N}}{F}^{2}\right]
    & = \operatorname{vec}\left(\b{I}\right) \cdot \Exp\left[\left(\bc{E}^{\otimes2}\right)^{N}\right] \operatorname{vec}\left(\b{I}\right) \\
    & = \operatorname{vec}\left(\b{I}\right) \cdot \Exp\left[\bc{E}^{\otimes2}\right]^{N} \operatorname{vec}\left(\b{I}\right)
     = \left[\operatorname{vec}\left(\b{I}\right) \cdot \vec{v}_{1,\otimes 2}\right]^{2}\lambda_{1,\otimes 2}^{N} + \c{O}\left(\lambda_{2,\otimes 2}^{N}\right).
  \end{align*}
  Assume without loss of generality that \(\left|\lambda_{1,\otimes2}\right|>\left|\lambda_{2,\otimes2}\right|\).
  By Jensen's inequality, we find
  \begin{align*}
    \varrho\left(\bc{E}\right)&=\lim_{N\rightarrow\infty}\exp \left(\frac{1}{N}\Exp\left[\log\norm{\bc{E}^{N}}{F}\right]\right)
                              \leq \lim_{N\rightarrow\infty}\exp \left(\frac{1}{2N}\log\Exp\left[\norm{\bc{E}^{N}}{F}^{2}\right]\right) \\
                              &=\sqrt{\lambda_{1,\otimes 2}}
                                = \sqrt{\rho\left(\Exp\left[ \bc{E}^{\otimes 2}\right]\right)}
  \end{align*}
  and the result follows.
\end{proof}

The following results are standard, but recorded here for convenience:
\begin{lemma}[Horn and Johnson \cite{HornJohnson2012_MatrixAnalysis}]\label{lem:posSemiDefBlocks}
  Let \(\b{H}=\left(
    \begin{smallmatrix}
      \b{A}&\b{B} \\
      \b{B}^{T}&\b{C}
    \end{smallmatrix}
  \right)\in \mathbb{R}^{(p+q)\times(p+q)}\) be symmetric, \(\b{A}\in\mathbb{R}^{p\times p}\), \(\b{B}\in\mathbb{R}^{p\times q}\), \(\b{C}\in\mathbb{R}^{q\times q}\).
  Then \(\b{H}\) is positive (semi-) definite if and only if \(\b{A}\) and \(\b{C}\) are positive (semi-) definite and there exists a contraction \(\b{X}\in \mathbb{R}^{p\times q}\) (i.e.~ all singular values are smaller or equal than 1) such that \(\b{B}=\b{A}^{\frac{1}{2}}\b{X}\b{C}^{\frac{1}{2}}\).
\end{lemma}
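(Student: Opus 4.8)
The plan is to prove the two implications separately, treating the positive definite and the merely positive semidefinite cases in parallel, with the latter carrying all of the subtlety. Throughout write $\b{H}=\bigl(\begin{smallmatrix}\b{A}&\b{B}\\\b{B}^{T}&\b{C}\end{smallmatrix}\bigr)$ and, for $z=\bigl(\begin{smallmatrix}u\\w\end{smallmatrix}\bigr)$, note $z^{T}\b{H}z=u^{T}\b{A}u+2u^{T}\b{B}w+w^{T}\b{C}w$.

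\emph{Sufficiency.} Given $\b{A},\b{C}$ positive semidefinite and $\b{B}=\b{A}^{1/2}\b{X}\b{C}^{1/2}$ with $\norm{\b{X}}{2}\le1$, set $\b{D}=\blockdiag\bigl(\b{A}^{1/2},\b{C}^{1/2}\bigr)$ and
\[
  \b{M}=\begin{pmatrix}\b{I}&\b{X}\\\b{X}^{T}&\b{I}\end{pmatrix}.
\]
A direct block multiplication, using that $\b{A}^{1/2},\b{C}^{1/2}$ are symmetric, gives $\b{H}=\b{D}\b{M}\b{D}$. Via the singular value decomposition $\b{X}=\b{U}\b{\Sigma}\b{V}^{T}$, the matrix $\b{M}$ is orthogonally similar to a direct sum of the $2\times2$ blocks $\bigl(\begin{smallmatrix}1&\sigma_{j}\\\sigma_{j}&1\end{smallmatrix}\bigr)$ (plus possibly some entries equal to $1$), so its eigenvalues are $1\pm\sigma_{j}(\b{X})\ge0$ and $\b{M}$ is positive semidefinite. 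Since $z^{T}\b{H}z=(\b{D}z)^{T}\b{M}(\b{D}z)\ge0$ for every $z$, so is $\b{H}$. For the strict version one additionally needs $\b{A},\b{C}$ positive definite (making $\b{D}$ invertible) and $\norm{\b{X}}{2}<1$ (making every $1-\sigma_{j}>0$, hence $\b{M}\succ0$, hence $\b{H}\succ0$).

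\emph{Necessity.} If $\b{H}\succeq0$ then restricting the quadratic form to $w=0$ and to $u=0$ forces $\b{A}\succeq0$ and $\b{C}\succeq0$. The key structural observation is the range inclusion $\operatorname{range}\b{B}\subseteq\operatorname{range}\b{A}=\operatorname{range}\b{A}^{1/2}$ and, symmetrically, $\operatorname{range}\b{B}^{T}\subseteq\operatorname{range}\b{C}$: for $v\in\ker\b{A}$ the function $s\mapsto\bigl(\begin{smallmatrix}sv\\w\end{smallmatrix}\bigr)^{T}\b{H}\bigl(\begin{smallmatrix}sv\\w\end{smallmatrix}\bigr)=2s\,v^{T}\b{B}w+w^{T}\b{C}w$ is affine in $s$ yet nonnegative for all $s$, so $v^{T}\b{B}w=0$ for all $w$, i.e.\ $\ker\b{A}\subseteq\ker\b{B}^{T}$. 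Define $\b{X}:=(\b{A}^{1/2})^{\dagger}\b{B}(\b{C}^{1/2})^{\dagger}$ with $\dagger$ the Moore--Penrose pseudoinverse. The identities $\b{A}^{1/2}(\b{A}^{1/2})^{\dagger}=\b{P}_{\operatorname{range}\b{A}}$ and $(\b{C}^{1/2})^{\dagger}\b{C}^{1/2}=\b{P}_{\operatorname{range}\b{C}}$ together with the two inclusions give $\b{A}^{1/2}\b{X}\b{C}^{1/2}=\b{P}_{\operatorname{range}\b{A}}\b{B}\b{P}_{\operatorname{range}\b{C}}=\b{B}$. Finally, writing $r=\b{A}^{1/2}u$, $p=\b{C}^{1/2}w$ (with $u,w$ in the respective ranges) one has $r^{T}\b{X}p=u^{T}\b{B}w$, $\norm{r}{2}^{2}=u^{T}\b{A}u$, $\norm{p}{2}^{2}=w^{T}\b{C}w$, and examining the nonnegative function $s\mapsto\bigl(\begin{smallmatrix}su\\-w\end{smallmatrix}\bigr)^{T}\b{H}\bigl(\begin{smallmatrix}su\\-w\end{smallmatrix}\bigr)$ at its minimum yields $(u^{T}\b{B}w)^{2}\le(u^{T}\b{A}u)(w^{T}\b{C}w)$, hence $\norm{\b{X}}{2}\le1$. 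In the definite case this is cleaner still: $\b{X}=\b{A}^{-1/2}\b{B}\b{C}^{-1/2}$ and the Schur complement inequality $\b{B}^{T}\b{A}^{-1}\b{B}\prec\b{C}$ gives $\b{X}^{T}\b{X}\prec\b{I}$.

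\emph{Main obstacle.} The real work is concentrated in the singular/semidefinite case: without the range inclusions the expression $\b{A}^{-1/2}\b{B}\b{C}^{-1/2}$ is meaningless, the factor $\b{X}$ is only pinned down on $\operatorname{range}\b{A}\times\operatorname{range}\b{C}$, and one must extract the norm bound $\norm{\b{X}}{2}\le1$ for the pseudoinverse construction directly from $\b{H}\succeq0$ rather than read it off a Schur complement. Everything else is routine block algebra. Since the result is classical one could instead simply cite \cite{HornJohnson2012_MatrixAnalysis}, but the argument above is self-contained and short enough to record.
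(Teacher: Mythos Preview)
The paper does not prove this lemma at all; it is recorded as a standard result and attributed to Horn and Johnson without argument. Your proof is correct and entirely self-contained: the factorisation $\b{H}=\b{D}\b{M}\b{D}$ together with the SVD of $\b{X}$ handles sufficiency cleanly, and for necessity the range inclusions $\ker\b{A}\subseteq\ker\b{B}^{T}$, $\ker\b{C}\subseteq\ker\b{B}$ combined with the pseudoinverse construction $\b{X}=(\b{A}^{1/2})^{\dagger}\b{B}(\b{C}^{1/2})^{\dagger}$ and the discriminant bound are exactly the right tools for the semidefinite case, which is indeed where the difficulty lies.

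One minor observation: as you implicitly note, the positive \emph{definite} reading of the paper's statement is slightly loose. The backward implication there actually requires a strict contraction $\norm{\b{X}}{2}<1$; with $\norm{\b{X}}{2}=1$ the matrix $\b{M}$ acquires a zero eigenvalue and $\b{H}$ is only semidefinite (e.g.\ $\b{A}=\b{C}=1$, $\b{X}=1$). Your treatment is more careful than the paper's wording on this point, though it is immaterial for the paper itself, which only invokes the lemma in the semidefinite case (in the proof of \Cref{lem:hadamardEst2}).
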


\begin{lemma}[Horn and Johnson \cite{HornJohnson1991_TopicsMatrixAnalysis}]\label{lem:semiDefHadamard}
  Let \(\b{Z}\), \(\b{Y}\) be symmetric positive (semi-) definite matrices. Then \(\b{Z} \circ \b{Y}\) is symmetric positive (semi-) definite.
\end{lemma}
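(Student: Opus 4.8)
The plan is to deduce the statement from the elementary fact that a sum of rank-one positive semidefinite matrices is positive semidefinite, exploiting the compatibility of the Hadamard product with outer products. First I would use the spectral decomposition of the symmetric positive semidefinite matrix \(\b{Z}\) to write \(\b{Z}=\sum_{k=1}^{n}\vec{u}_{k}\vec{u}_{k}^{T}\), absorbing the (non-negative) eigenvalues into the eigenvectors \(\vec{u}_{k}\); this representation is available precisely because \(\b{Z}\) has no negative eigenvalues. Doing the same for \(\b{Y}\) gives \(\b{Y}=\sum_{l=1}^{n}\vec{w}_{l}\vec{w}_{l}^{T}\). Symmetry of \(\b{Z}\circ\b{Y}\) is immediate, since \(\left(\b{Z}\circ\b{Y}\right)_{ij}=\b{Z}_{ij}\b{Y}_{ij}=\b{Z}_{ji}\b{Y}_{ji}=\left(\b{Z}\circ\b{Y}\right)_{ji}\).

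The key algebraic identity is that, for vectors \(\vec{a},\vec{b},\vec{c},\vec{d}\in\RR^{n}\), there holds \(\left(\vec{a}\vec{b}^{T}\right)\circ\left(\vec{c}\vec{d}^{T}\right)=\left(\vec{a}\circ\vec{c}\right)\left(\vec{b}\circ\vec{d}\right)^{T}\), which one verifies entrywise since both sides have \((i,j)\) entry \(a_{i}b_{j}c_{i}d_{j}\). Using bilinearity of the Hadamard product, I would then expand
\[
  \b{Z}\circ\b{Y}=\sum_{k=1}^{n}\sum_{l=1}^{n}\left(\vec{u}_{k}\vec{u}_{k}^{T}\right)\circ\left(\vec{w}_{l}\vec{w}_{l}^{T}\right)=\sum_{k=1}^{n}\sum_{l=1}^{n}\left(\vec{u}_{k}\circ\vec{w}_{l}\right)\left(\vec{u}_{k}\circ\vec{w}_{l}\right)^{T},
\]
which exhibits \(\b{Z}\circ\b{Y}\) as a sum of rank-one positive semidefinite matrices, hence positive semidefinite. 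This settles the semidefinite case.

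For the strictly positive definite case I would argue separately. If \(\b{Y}\) is positive definite, set \(\delta=\lambda_{\min}(\b{Y})>0\), so that \(\b{Y}-\delta\b{I}\) is positive semidefinite, and write \(\b{Z}\circ\b{Y}=\delta\,\b{Z}\circ\b{I}+\b{Z}\circ(\b{Y}-\delta\b{I})=\delta\diag\left(\b{Z}_{11},\dots,\b{Z}_{nn}\right)+\b{Z}\circ(\b{Y}-\delta\b{I})\). The first term is a strictly positive diagonal matrix because positive definiteness of \(\b{Z}\) forces \(\b{Z}_{ii}>0\) for every \(i\), and the second term is positive semidefinite by the case just proved; since a positive definite matrix plus a positive semidefinite matrix is positive definite, this completes the argument.

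The only point requiring any care — and hence the main obstacle, such as it is — is the promotion from the semidefinite to the definite conclusion: one must peel off a genuinely positive semidefinite remainder, which is why I split off a multiple of the identity governed by \(\lambda_{\min}(\b{Y})\) rather than, say, the diagonal of \(\b{Y}\), and then invoke strict positivity of the diagonal entries of \(\b{Z}\) to recover definiteness. Everything else is routine linear algebra.
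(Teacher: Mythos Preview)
Your argument is correct and is essentially the classical proof of the Schur product theorem: express each factor as a sum of rank-one positive semidefinite matrices via the spectral decomposition, use the entrywise identity \((\vec{a}\vec{b}^{T})\circ(\vec{c}\vec{d}^{T})=(\vec{a}\circ\vec{c})(\vec{b}\circ\vec{d})^{T}\), and conclude. Your handling of the strictly definite case by peeling off \(\delta\b{I}\) from \(\b{Y}\) and using that \(\b{Z}\circ\b{I}=\diag(\b{Z}_{11},\dots,\b{Z}_{nn})\) is strictly positive is also clean and correct.

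As for comparison: the paper does not supply its own proof of this lemma at all. It is quoted as a standard result from Horn and Johnson, \emph{Topics in Matrix Analysis}, and used as a black box (specifically, in the proof of \Cref{lem:hadamardEst2}). Your write-up is in fact the proof one finds in that reference, so there is nothing to contrast.
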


We now prove the following estimates for the Hadamard power of matrices:
\begin{lemma}\label{lem:hadamardEst2}
  Let \(\b{Z}\in\mathbb{R}^{n\times m}\). Then
  \begin{align*}
    \norm{\b{Z}^{\circ2}}{2} & \leq \max_{i=1,\dots,m}\norm{\b{Z}\vec{e}_{m}^{\,(i)}}{2} \max_{j=1,\dots,n}\norm{\b{Z}^{T}\vec{e}_{n}^{\,(j)}}{2} \leq \norm{\b{Z}}{2}^{2},
  \end{align*}
  where \(\left\{\vec{e}_{m}^{\,(i)}\right\}\) and \(\left\{\vec{e}_{n}^{\,(j)}\right\}\) are the canonical unit basis vectors in \(\mathbb{R}^{m}\) and \(\mathbb{R}^{n}\) respectively.
\end{lemma}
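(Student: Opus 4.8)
The plan is to bound $\norm{\b{Z}^{\circ 2}}{2}$ by producing, for arbitrary unit vectors, a controlled estimate of the bilinear form $\vec{u}^{T}(\b{Z}^{\circ 2})\vec{v}$. The key observation is the standard identity relating Hadamard products to Kronecker products: if $\b{D}_{u}=\diag(\vec{u})$ and $\b{D}_{v}=\diag(\vec{v})$, then entrywise one has $\vec{u}^{T}(\b{Z}\circ\b{Z})\vec{v} = \sum_{i,j} u_{i}\b{Z}_{ij}^{2}v_{j}$, which can be rewritten as $\operatorname{tr}(\b{D}_{u}\b{Z}\b{D}_{v}\b{Z}^{T})$, or equivalently as $\langle \b{Z}^{T}\b{D}_{u}^{1/2}, \b{D}_{v}^{1/2}\b{Z}^{T}\rangle_{F}$ after suitable rearrangement — the point being to split the two factors of $\b{Z}$ so that the $u$-weights attach to one copy and the $v$-weights to the other.

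First I would reduce to the case where $\vec{u}$ and $\vec{v}$ have nonnegative entries: since $\b{Z}^{\circ 2}$ has nonnegative entries, $\norm{\b{Z}^{\circ2}}{2} = \max_{\norm{\vec{u}}{2}=\norm{\vec{v}}{2}=1,\ \vec{u},\vec{v}\ge 0}\ \vec{u}^{T}\b{Z}^{\circ2}\vec{v}$ (the maximizing vectors can be taken entrywise nonnegative by replacing entries with their absolute values, which only increases the form). Then I would write $\vec{u}^{T}(\b{Z}\circ\b{Z})\vec{v} = \sum_{i,j}(\sqrt{u_{i}}\,\b{Z}_{ij}\,\sqrt{v_{j}})^{2}$ and apply Cauchy–Schwarz in the form $\sum_{i,j} a_{ij}b_{ij}$ with $a_{ij} = u_{i}\b{Z}_{ij}^{2}$ grouped by rows and $b_{ij}=v_{j}$... actually more cleanly: bound $\sum_{i,j}u_{i}\b{Z}_{ij}^{2}v_{j} \le \big(\sum_{i} u_{i}\,(\sum_{j}\b{Z}_{ij}^{2} v_{j})\big)$ and control the inner sum. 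The cleanest route is: $\sum_{j}\b{Z}_{ij}^{2}v_{j} \le \big(\sum_{j}\b{Z}_{ij}^{2}\big)\max_{j}v_{j}$? — no, that loses too much. Instead, use that $\sum_{j}\b{Z}_{ij}^{2}v_{j} = \vec{v}^{T}(\b{Z}^{T}\vec{e}_{n}^{(i)})^{\circ 2}$ componentwise, and bound $\sum_{j}\b{Z}_{ij}^{2}v_{j}\le \norm{\vec{v}}{2}\,\norm{\,\b{Z}^{T}\vec{e}_{n}^{(i)}\,}{2}\cdot\max_{j}|\b{Z}_{ij}|$ via Cauchy–Schwarz on $\sum_j \b{Z}_{ij}^2 v_j \le (\sum_j \b{Z}_{ij}^4)^{1/2}(\sum_j v_j^2)^{1/2}$ — hm, this introduces $\ell^{4}$ norms. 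The sharpest and simplest bound avoiding that: by Cauchy–Schwarz applied to $\sum_{i,j}(\sqrt{u_{i}}|\b{Z}_{ij}|)(|\b{Z}_{ij}|\sqrt{v_{j}})$,
\begin{align*}
  \vec{u}^{T}(\b{Z}^{\circ2})\vec{v} \le \Big(\sum_{i,j}u_{i}\b{Z}_{ij}^{2}\Big)^{1/2}\Big(\sum_{i,j}\b{Z}_{ij}^{2}v_{j}\Big)^{1/2} = \Big(\sum_{i}u_{i}\norm{\b{Z}^{T}\vec{e}_{n}^{(i)}}{2}^{2}\Big)^{1/2}\Big(\sum_{j}v_{j}\norm{\b{Z}\vec{e}_{m}^{(j)}}{2}^{2}\Big)^{1/2}.
\end{align*}
Bounding each $\norm{\b{Z}^{T}\vec{e}_{n}^{(i)}}{2}^{2}$ by its maximum over $i$ and using $\sum_{i}u_{i} = \vec{u}\cdot\vec{1}\le\sqrt{n}$ would cost a dimension factor, so instead I bound $\norm{\b{Z}^{T}\vec{e}_{n}^{(i)}}{2}^{2}\le \max_{i}\norm{\b{Z}^{T}\vec{e}_{n}^{(i)}}{2}^{2}$ and then use only $\sum_i u_i^2 = 1$ — which forces using $u_i \le 1$, giving $\sum_i u_i \norm{\b{Z}^T\vec{e}_n^{(i)}}{2}^2 \le \max_i \norm{\b{Z}^T\vec{e}_n^{(i)}}{2}^2 \cdot \sum_i u_i$. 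That still needs $\sum_i u_i$. The resolution is to note $u_i \ge 0$, $\sum_i u_i^2 = 1$ does not bound $\sum_i u_i$ dimension-freely; hence one must instead keep $\sum_{i,j}u_i \b{Z}_{ij}^2 = \vec{u}\cdot(\b{Z}^{\circ2}\vec{1})$ and recognize this as at most $\norm{\b{Z}^{\circ2}\vec{1}}{2}$, circular.

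The correct clean argument, which I would actually write, is the operator-splitting one: factor $\b{Z}^{\circ 2} = (\b{Z}\otimes\b{Z})$ restricted appropriately, or more elementarily, for unit $\vec{v}$ write $(\b{Z}^{\circ2}\vec{v})_{i} = \sum_{j}\b{Z}_{ij}^{2}v_{j} = (\b{Z}\,\b{D}_{v}\,\b{Z}^{T})_{ii}$ where $\b{D}_v = \diag(\vec v)$; but actually $(\b{Z}^{\circ 2}\vec v)_i = \sum_j \b{Z}_{ij} \b{Z}_{ij} v_j$, and by Cauchy-Schwarz $\le \norm{\b{Z}^T \vec e_n^{(i)}}{2} \cdot \norm{\b{D}_v^{1/2} \b{Z}^T \vec e_n^{(i)}}{2}\cdot$... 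The hard part — and the step I expect to be the main obstacle — is getting the \emph{dimension-free} constant, i.e. avoiding any factor of $n$ or $m$. I believe the right mechanism is: $\norm{\b{Z}^{\circ2}}{2}\le\norm{\b{Z}^{\circ2}}{1\to\infty}$-type mixed-norm bounds are too weak, so instead use the factorization $\b{Z}^{\circ 2} = \b{\Phi}^{T}\b{\Psi}$ where the $i$-th column of $\b{\Phi}$ is $\b{Z}^{T}\vec e_n^{(i)}$ "tensored with itself" appropriately — concretely, using the Schur product theorem machinery (\Cref{lem:semiDefHadamard}) applied to $\b{Z}\b{Z}^{T}\circ\b{Z}\b{Z}^{T}$ is not quite it either. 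I would therefore fall back on the following: $\norm{\b{Z}^{\circ 2}}{2} = \norm{\b{Z}^{\circ 2}(\b{Z}^{\circ 2})^{T}}{2}^{1/2}$, expand $(\b{Z}^{\circ 2}(\b{Z}^{\circ2})^{T})_{ik} = \sum_j \b{Z}_{ij}^2\b{Z}_{kj}^2 = \langle (\b{Z}^T\vec e^{(i)})^{\circ 2}, (\b{Z}^T\vec e^{(k)})^{\circ 2}\rangle \le \norm{(\b{Z}^T\vec e^{(i)})^{\circ 2}}{2}\norm{(\b{Z}^T\vec e^{(k)})^{\circ 2}}{2}$, recurse once more on the $\circ 2$ of a \emph{vector} (where $\norm{\vec w^{\circ 2}}{2} = \norm{\vec w}{4}^2 \le \norm{\vec w}{2}^2$), and then bound the resulting rank-one-dominated Gram matrix by $\max_i\norm{\b{Z}^T\vec e_n^{(i)}}{2}^2$ times the analogous row quantity, using that a matrix of the form $(\alpha_i\alpha_k)_{ik}$ has spectral norm $\sum_i\alpha_i^2$ — which again needs care. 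I expect, after assembling these pieces with the symmetric roles of rows and columns (i.e. also using $\b{Z}^{T}$), the two maxima $\max_{i}\norm{\b{Z}\vec e_m^{(i)}}{2}$ and $\max_{j}\norm{\b{Z}^{T}\vec e_n^{(j)}}{2}$ emerge as the bound, and the final inequality $\max_i\norm{\b{Z}\vec e_m^{(i)}}{2}\le\norm{\b{Z}}{2}$ and likewise for $\b{Z}^{T}$ is immediate since $\norm{\b{Z}\vec e_m^{(i)}}{2}\le\norm{\b{Z}}{2}\norm{\vec e_m^{(i)}}{2} = \norm{\b{Z}}{2}$.
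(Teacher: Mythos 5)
There is a genuine gap: you never actually complete any of the routes you open, and the one you push furthest does not yield the claimed dimension-free bound. Your first attempt (Cauchy--Schwarz on $\sum_{i,j}(\sqrt{u_i}|\b{Z}_{ij}|)(|\b{Z}_{ij}|\sqrt{v_j})$) founders, as you yourself note, on needing $\sum_i u_i$, which costs a factor $\sqrt{n}$. Your final route fares no better: from the entrywise estimate $\left(\b{Z}^{\circ2}\left(\b{Z}^{\circ2}\right)^{T}\right)_{ik}\leq\alpha_i\alpha_k$ with $\alpha_i=\norm{\b{Z}^{T}\vec{e}_{n}^{\,(i)}}{4}^{2}$, the rank-one domination argument (valid for entrywise nonnegative matrices) gives only $\norm{\b{Z}^{\circ2}}{2}^{2}\leq\sum_i\alpha_i^{2}=\sum_{i,j}\b{Z}_{ij}^{4}$, i.e.\ the Frobenius bound $\norm{\b{Z}^{\circ2}}{F}$. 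That is strictly weaker than the lemma: for $\b{Z}=\b{I}_n$ it gives $\sqrt{n}$, whereas the claimed bound (max column norm times max row norm) is $1$. The step you flag with ``which again needs care'' --- replacing the Gram matrix by the product of the two maxima --- is exactly the missing content, and the natural completion of your sketch does not supply it. So the proposal, as written, does not prove the statement.

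The mechanism the paper uses, and which your sketch circles around without landing on, is the Schur product theorem combined with the block characterization of positive semi-definiteness: the matrices $\left(\begin{smallmatrix}\b{Z}^{T}\\ \b{I}_{n}\end{smallmatrix}\right)^{T}$-type Gram matrices $\left(\begin{smallmatrix}\b{Z}&\b{I}_{n}\end{smallmatrix}\right)^{T}\left(\begin{smallmatrix}\b{Z}&\b{I}_{n}\end{smallmatrix}\right)$ and $\left(\begin{smallmatrix}\b{I}_{m}&\b{Z}^{T}\end{smallmatrix}\right)^{T}\left(\begin{smallmatrix}\b{I}_{m}&\b{Z}^{T}\end{smallmatrix}\right)$ are PSD, so by \Cref{lem:semiDefHadamard} their Hadamard product is PSD, and its off-diagonal block is $\b{Z}^{\circ2}$ while its diagonal blocks are $\left(\b{Z}^{T}\b{Z}\right)\circ\b{I}_{m}$ and $\left(\b{Z}\b{Z}^{T}\right)\circ\b{I}_{n}$. \Cref{lem:posSemiDefBlocks} then gives $\norm{\b{Z}^{\circ2}}{2}^{2}\leq\norm{\left(\b{Z}\b{Z}^{T}\right)\circ\b{I}_{n}}{2}\,\norm{\left(\b{Z}^{T}\b{Z}\right)\circ\b{I}_{m}}{2}$, whose factors are precisely the squared maxima of the row and column norms; no dimension factor ever appears. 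If you want to salvage your approach, you need some device of this kind that couples the row weights and column weights multiplicatively (a contraction/factorization statement), rather than summing over one index, which is where every one of your attempts leaks a factor of $\sqrt{n}$ or $\sqrt{m}$.
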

\begin{proof}
  Both the matrix \(\left(\begin{array}{cc}\b{Z} & \b{I}_{n}\end{array}\right)^{T}\left(\begin{array}{cc}\b{Z}& \b{I}_{n}\end{array}\right)\) and \(\left(\begin{array}{cc}\b{I}_{m}& \b{Z}^{T}\end{array}\right)^{T}\left(\begin{array}{cc}\b{I}_{m}& \b{Z}^{T}\end{array}\right)\) are symmetric positive semi-definite.
  Using \Cref{lem:semiDefHadamard}, their Hadamard product
  \begin{align*}
    \left(
    \begin{array}{cc}
      \left(\b{Z}^{T}\b{Z}\right)\circ \b{I}_{m}& \left(\b{Z}^{T}\right)^{\circ 2} \\
      \b{Z}^{\circ 2} & \b{I}_{n}\circ \left(\b{Z}\b{Z}^{T}\right)
    \end{array}
                        \right)
  \end{align*}
  is also positive semi-definite.
  Hence, by \Cref{lem:posSemiDefBlocks},
  \begin{align*}
    \norm{\b{Z}^{\circ 2}}{2}^{2} \leq \norm{\left(\b{Z}\b{Z}^{T}\right)\circ \b{I}_{n}}{2} \norm{\left(\b{Z}^{T}\b{Z}\right)\circ \b{I}_{m}}{2}=\max_{i}\norm{\b{Z}\vec{e}_{m}^{\,(i)}}{2}^{2} \max_{i}\norm{\b{Z}^{T}\vec{e}_{n}^{\,(j)}}{2}^{2}.
  \end{align*}
  The second inequality follows trivially.
\end{proof}

\begin{definition}[Energy norms]
  For matrices \(\b{Z}\in\mathbb{R}^{n_{\ell}\times n_{\ell}}\), we define the usual matrix energy norm \(\norm{\b{Z}}{A}\) as well as the double energy norm \(\norm{\b{Z}}{A^{2}}\) to be \(\norm{\b{Z}}{A} = \norm{\b{A}_{\ell}^{\frac{1}{2}} \b{Z} \b{A}_{\ell}^{-\frac{1}{2}}}{2}\), and \(\norm{\b{Z}}{A^{2}} = \norm{\b{A}_{\ell} \b{Z} \b{A}_{\ell}^{-1}}{2}\).
  For matrices \(\b{W}\in\mathbb{R}^{n_{\ell}^{2}\times n_{\ell}^{2}}\), we define the tensor energy norm \(\norm{\b{W}}{A}\) and the tensor double energy norm \(\norm{\b{W}}{A^{2}}\) to be \(\norm{\b{W}}{A} = \norm{\left(\b{A}_{\ell}^{\frac{1}{2}}\right)^{\otimes2} \b{Z} \left(\b{A}_{\ell}^{-\frac{1}{2}}\right)^{\otimes2}}{2}\), and \(\norm{\b{W}}{A^{2}} = \norm{\b{A}_{\ell}^{\otimes2} \b{Z} \left(\b{A}_{\ell}^{-1}\right)^{\otimes2}}{2}\).
  In all cases \(\norm{\bullet}{2}\) is the spectral norm.
\end{definition}

The following Lemma permits us to expand the second moment of a fault-prone iteration matrix in terms of expectations and variances of the fault matrices:
\begin{lemma}\label{lem:prodPert}
  Let \(\bc{B}_{i}\), \(i=1,\dots,3\), be independent random matrices.
  Then
  \begin{align}
    \Exp \left[ \left(\b{I}-\bc{B}_{1}\right)^{\otimes2}\right] = \Exp\left[\b{I}-\bc{B}_{1}\right]^{\otimes2} + \Var\left[\bc{B}_{1}\right]
    \label{eq:oneRandomMatrixAgain}
  \end{align}
  and
  \begin{align}
    & \Exp \left[ \left(\b{I}-\bc{B}_{1}\bc{B}_{2}\bc{B}_{3}\right)^{\otimes2}\right] \label{eq:threeRandomMatrices}=\\
     & \Exp\left[\b{I}-\bc{B}_{1}\bc{B}_{2}\bc{B}_{3}\right]^{\otimes2}\nonumber \\
     +&  \Var\left[\bc{B}_{1}\right] \Exp\left[\bc{B}_{2}\bc{B}_{3}\right]^{\otimes2}
      +  \Exp\left[\bc{B}_{1}\right]^{\otimes2} \Var\left[\bc{B}_{2}\right] \Exp\left[\bc{B}_{3}\right]^{\otimes2}
      +  \Exp\left[\bc{B}_{1}\bc{B}_{2}\right]^{\otimes2} \Var\left[\bc{B}_{3}\right] \nonumber\\
     + &\Var\left[\bc{B}_{1}\right] \Var\left[\bc{B}_{2}\right] \Exp\left[\bc{B}_{3}\right]^{\otimes2}
      +  \Var\left[\bc{B}_{1}\right] \Exp\left[\bc{B}_{2}\right]^{\otimes2} \Var\left[\bc{B}_{3}\right]
      +  \Exp\left[\bc{B}_{1}\right]^{\otimes2} \Var\left[\bc{B}_{2}\right] \Var\left[\bc{B}_{3}\right] \nonumber\\
    + & \Var\left[\bc{B}_{1}\right] \Var\left[\bc{B}_{2}\right] \Var\left[\bc{B}_{3}\right]. \nonumber
  \end{align}
\end{lemma}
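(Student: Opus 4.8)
The plan is to reduce both identities to the bilinearity of the Kronecker product together with the definition $\Var[\bc{B}]=\Exp[\bc{B}\otimes\bc{B}]-\Exp[\bc{B}]\otimes\Exp[\bc{B}]$ and the independence of the $\bc{B}_i$. For \cref{eq:oneRandomMatrixAgain}, I would simply expand $(\b{I}-\bc{B}_1)^{\otimes2}=\b{I}\otimes\b{I}-\bc{B}_1\otimes\b{I}-\b{I}\otimes\bc{B}_1+\bc{B}_1\otimes\bc{B}_1$, take expectations term by term, and then recognise $\Exp[\bc{B}_1\otimes\bc{B}_1]=\Exp[\bc{B}_1]^{\otimes2}+\Var[\bc{B}_1]$; collecting the remaining three terms gives exactly $\Exp[\b{I}-\bc{B}_1]^{\otimes2}$. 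This is the same computation already carried out in the proof of \Cref{thm:smoothener} (see \cref{eq:oneRandomMatrix}), so it is essentially immediate.

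For \cref{eq:threeRandomMatrices} the cleanest route is to introduce the ``centred'' matrices $\bc{C}_i := \bc{B}_i-\Exp[\bc{B}_i]$, so that $\bc{B}_i\otimes\bc{B}_i = \Exp[\bc{B}_i]^{\otimes2} + \Exp[\bc{B}_i]\otimes\bc{C}_i + \bc{C}_i\otimes\Exp[\bc{B}_i] + \bc{C}_i\otimes\bc{C}_i$, with $\Exp[\bc{C}_i\otimes\bc{C}_i]=\Var[\bc{B}_i]$ and $\Exp[\bc{C}_i]=\b{0}$. First I would write $(\b{I}-\bc{B}_1\bc{B}_2\bc{B}_3)^{\otimes2} = \b{I}^{\otimes2} - \b{I}\otimes(\bc{B}_1\bc{B}_2\bc{B}_3) - (\bc{B}_1\bc{B}_2\bc{B}_3)\otimes\b{I} + (\bc{B}_1\bc{B}_2\bc{B}_3)^{\otimes2}$, and using the mixed-product property $(\b{B}_1\b{B}_2\b{B}_3)\otimes(\b{B}_1\b{B}_2\b{B}_3) = (\bc{B}_1\otimes\bc{B}_1)(\bc{B}_2\otimes\bc{B}_2)(\bc{B}_3\otimes\bc{B}_3)$, the last term factors as a product of three independent Kronecker-squared factors. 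Taking expectations and using independence, $\Exp[(\bc{B}_1\bc{B}_2\bc{B}_3)^{\otimes2}] = \Exp[\bc{B}_1^{\otimes2}]\,\Exp[\bc{B}_2^{\otimes2}]\,\Exp[\bc{B}_3^{\otimes2}]$. Now substitute $\Exp[\bc{B}_i^{\otimes2}]=\Exp[\bc{B}_i]^{\otimes2}+\Var[\bc{B}_i]$ into this product and expand the resulting $2^3=8$ terms: the all-$\Exp$ term combines with the lower-order pieces of the expansion of $(\b{I}-\bc{B}_1\bc{B}_2\bc{B}_3)^{\otimes2}$ to reconstruct $\Exp[\b{I}-\bc{B}_1\bc{B}_2\bc{B}_3]^{\otimes2}$ (again using $\Exp[(\bc{B}_1\bc{B}_2\bc{B}_3)^{\otimes2\text{-type linear terms}}] = \Exp[\bc{B}_1]^{\otimes2}\Exp[\bc{B}_2]^{\otimes2}\Exp[\bc{B}_3]^{\otimes2}$-consistent bookkeeping on the cross terms $\b{I}\otimes(\cdots)$ and $(\cdots)\otimes\b{I}$), and the remaining seven terms are precisely the seven $\Var$-containing terms listed in the statement, indexed by the nonempty subsets of $\{1,2,3\}$ that carry a variance factor while the complementary indices carry $\Exp[\bc{B}_i]^{\otimes2}$ (with consecutive runs of $\Exp$'s collapsed into $\Exp[\bc{B}_2\bc{B}_3]^{\otimes2}$ or $\Exp[\bc{B}_1\bc{B}_2]^{\otimes2}$ via independence, exactly as written).

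The only delicate point — and the step I would be most careful about — is the bookkeeping that shows the cross terms $\b{I}\otimes(\bc{B}_1\bc{B}_2\bc{B}_3)$, $(\bc{B}_1\bc{B}_2\bc{B}_3)\otimes\b{I}$ together with the all-$\Exp$ term of the expanded product recombine into $\Exp[\b{I}-\bc{B}_1\bc{B}_2\bc{B}_3]^{\otimes2}$; here one must note that $\Exp[\bc{B}_1\bc{B}_2\bc{B}_3]=\Exp[\bc{B}_1]\Exp[\bc{B}_2]\Exp[\bc{B}_3]$ by independence, so $\Exp[\b{I}-\bc{B}_1\bc{B}_2\bc{B}_3]^{\otimes2} = \b{I}^{\otimes2} - \b{I}\otimes\Exp[\bc{B}_1]\Exp[\bc{B}_2]\Exp[\bc{B}_3] - \Exp[\bc{B}_1]\Exp[\bc{B}_2]\Exp[\bc{B}_3]\otimes\b{I} + \Exp[\bc{B}_1]^{\otimes2}\Exp[\bc{B}_2]^{\otimes2}\Exp[\bc{B}_3]^{\otimes2}$, and each of these four pieces matches a term already produced above. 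Everything else is a mechanical expansion; there is no analytic obstacle, only the need to keep the $2^3$ terms organised, which the subset-indexing makes transparent.
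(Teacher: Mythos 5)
Your proposal is correct and follows essentially the same route as the paper's proof: \cref{eq:oneRandomMatrixAgain} is the computation already recorded in \cref{eq:oneRandomMatrix}, and \cref{eq:threeRandomMatrices} is obtained by multiplying out the Kronecker square, using independence and the mixed-product property together with \(\Exp\left[\bc{B}_{j}^{\otimes2}\right]=\Exp\left[\bc{B}_{j}\right]^{\otimes2}+\Var\left[\bc{B}_{j}\right]\), and recombining the all-expectation pieces into \(\Exp\left[\b{I}-\bc{B}_{1}\bc{B}_{2}\bc{B}_{3}\right]^{\otimes2}\) (the paper's ``complete the square'' step). No gap remains.
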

\begin{proof}
  \Cref{eq:oneRandomMatrixAgain} has already been shown in \cref{eq:oneRandomMatrix}.
  To obtain the identity~\cref{eq:threeRandomMatrices} we multiply out the tensor product, complete the square to recover the first term, and then use that
  \begin{align*}
    \Exp \left[ \bc{B}_{j}^{\otimes2} \right] = \Var \left[ \bc{B}_{j} \right] + \Exp \left[ \bc{B}_{j} \right]^{\otimes2}.
  \end{align*}
  The proof can easily be generalised to arbitrarily many random matrices~\cite{Glusa2017_MultigridDomainDecompositionMethods}.
\end{proof}

Since the proof on level \(L\) only involves the levels \(L\) and \(L-1\), we will drop the first subscript and replace the second one with a subscript \(C\) for the remainder of this work.

We set
\begin{align*}
  \Exp \left[ \bc{X}^{(\bullet)}\right] &= e^{(\bullet)}\b{I}, & \Var \left[ \bc{X}^{(\bullet)}\right] &= \b{V}^{(\bullet)}.
\end{align*}
Using \Cref{lem:prodPert} the second moment of the fault-prone coarse grid correction and smoothener can be written as
\begin{align}
  \Exp \left[ \left(\bc{E}^{CG}\right)^{\otimes2} \right]
  & = \Exp \left[ \bc{E}^{CG} \right]^{\otimes2}+\b{C}^{(P)}+\b{C}^{(R)}+\b{C}^{(\rho)} \label{eq:sumCG}\\
  &\quad+\b{C}^{(P,R)}+\b{C}^{(P,\rho)}+\b{C}^{(R,\rho)}+\b{C}^{(P,R,\rho)}, \nonumber\\
  \Exp \left[ \left(\bc{E}^{S}\right)^{\otimes2} \right] &= \Exp \left[ \bc{E}^{S} \right]^{\otimes2} + \b{C}^{(S)},\label{eq:sumSmoothener}
\end{align}
with
\begin{align*}
  \Exp \left[ \bc{E}^{CG} \right]   &=\b{E}^{CG}+ \left(1-e^{(P)}e^{(R)}e^{(\rho)}\right)\left(\b{I}-\b{E}^{CG}\right), \\
  \b{C}^{(P)}     & = \left(e^{(R)}e^{(\rho)}\right)^{2} \b{V}^{(P)}\left(\b{P}\b{A}_{C}^{-1}\b{R} \b{A}\right)^{\otimes2}, \\
  \b{C}^{(R)}     & = \left(e^{(P)}e^{(\rho)}\right)^{2} \left(\b{P}\b{A}_{C}^{-1}\right)^{\otimes2}\b{V}^{(R)}\left(\b{R} \b{A}\right)^{\otimes2}, \\
  \b{C}^{(\rho)}     & = \left(e^{(P)}e^{(R)}\right)^{2} \left(\b{P}\b{A}_{C}^{-1}\b{R}\right)^{\otimes2} \b{V}^{(\rho)} \b{A}^{\otimes2}, \\
  \b{C}^{(P,R)}   & = \left(e^{(\rho)}\right)^{2} \b{V}^{(P)} \left(\b{P}\b{A}_{C}^{-1}\right)^{\otimes2}\b{V}^{(R)}\left(\b{R} \b{A}^{\otimes2}\right)^{\otimes2}, \\
  \b{C}^{(P,\rho)}   & = \left(e^{(R)}\right)^{2} \b{V}^{(P)} \left(\b{P}\b{A}_{C}^{-1}\b{R}\right)^{\otimes2} \b{V}^{(\rho)} \b{A}^{\otimes2}, \\
  \b{C}^{(R,\rho)}   & = \left(e^{(P)}\right)^{2} \left(\b{P}\b{A}_{C}^{-1}\right)^{\otimes2} \b{V}^{(R)} \b{R}^{\otimes2} \b{V}^{(\rho)}\b{A}^{\otimes2}, \\
  \b{C}^{(P,R,\rho)} & = \b{V}^{(P)} \left(\b{P}\b{A}_{C}^{-1}\right)^{\otimes2}\b{V}^{(R)}\b{R}^{\otimes2} \b{V}^{(\rho)} \b{A}^{\otimes2}, \\
  \Exp \left[ \bc{E}^{S} \right]    &= \b{E}^{S} + e^{(S)} \left(\b{I}-\b{E}^{S}\right), \\
  \b{C}^{(S)}     & =\b{V}^{(S)}\left(\b{N}\b{A}\right)^{\otimes2}.
\end{align*}
When the prolongation is protected, or not subject to faults, we have \(e^{(P)}=1\) and \(\b{V}^{(P)}=\b{0}\), so that all \(\b{C}^{(\bullet)}\) with a superscript containing \(P\) are zero.

\section{Proof of \texorpdfstring{\Cref{thm:perturbedTGconv}}{Theorem 3}}
\label{sec:proof-TGconv}

\begin{proof}
  We have that
  \begin{align*}
    e^{(\bullet)} & = 1-q, & \b{V}^{(P)} & =\b{V}^{(\rho)} = \b{V}^{(S)}= q(1-q)\b{K}, & \b{V}^{(R)} & = q(1-q)\b{K_{C}},
  \end{align*}
  with
  \begin{align*}
    \b{K} & = \blockdiag\left(\vec{e}^{\,(i)} \otimes \left(\vec{e}^{\,(i)}\right)^{T},~i=1,\ldots,n\right), \\
    \b{K}_{C} & = \blockdiag\left(\vec{e}_{C}^{\,(i)} \otimes \left(\vec{e}_{C}^{\,(i)}\right)^{T},~i=1,\ldots,n_{C}\right).
  \end{align*}
  Here, \(\vec{e}^{\,(i)}\) and  \(\vec{e}_{C}^{\,(i)}\) are the canonical unit basis vectors of \(\mathbb{R}^{n}\) and \(\mathbb{R}^{n_{C}}\) respectively.
  Adding and subtracting \(\Exp \left[ \bc{E}^{TG} \right]^{\otimes2}\) from \(\Exp \left[ \left(\bc{E}^{TG}\right)^{\otimes2} \right]\), we estimate using the energy norm on the tensor space
  \begin{align*}
    \rho\left(\Exp \left[ \left(\bc{E}^{TG}\right)^{\otimes2} \right]\right)
    & \leq \norm{\Exp \left[ \left(\bc{E}^{TG}\right)^{\otimes2} \right]}{A} \\
    &\leq \norm{\Exp \left[ \bc{E}^{TG} \right]}{A}^{2}
      + \norm{\Exp \left[ \left(\left(\bc{E}^{S,\text{post}}\right)^{\nu_{2}}\bc{E}^{CG}\left(\bc{E}^{S,\text{pre}}\right)^{\nu_{1}}\right)^{\otimes2} \right]}{A} \\
    &\quad- \norm{\Exp \left[ \left(\bc{E}^{S,\text{post}}\right)^{\nu_{2}}\bc{E}^{CG}\left(\bc{E}^{S,\text{pre}}\right)^{\nu_{1}} \right]}{A}^{2}.
  \end{align*}
  We then use the subadditivity of \(\norm{\bullet}{A}\) and equations \cref{eq:sumCG} and \cref{eq:sumSmoothener} to write
  \begin{align}
    \rho\left(\Exp
    \left[ \left(\bc{E}^{TG}\right)^{\otimes2} \right]\right) & \leq
    \begin{multlined}[t]
      \norm{\Exp \left[ \bc{E}^{TG} \right]}{A}^{2} \nonumber\\
      \quad + \Big(\norm{\Exp \left[ \bc{E}^{CG} \right]}{A}^{2}+\norm{\b{C}^{(P)}}{A} + \norm{\b{C}^{(R)}}{A} + \norm{\b{C}^{(\rho)}}{A} \nonumber\\
      + \norm{\b{C}^{(P,R)}}{A} + \norm{\b{C}^{(P,\rho)}}{A} + \norm{\b{C}^{(R,\rho)}}{A} + \norm{\b{C}^{(P,R,\rho)}}{A}\Big) \nonumber
    \end{multlined} \nonumber\\
                                                              &\qquad \times\left(\norm{\Exp \left[ \bc{E}^{S} \right]}{A}^{2}+\norm{\b{C}^{(S)}}{A}\right)^{\nu_{1}+\nu_{2}} \nonumber\\
                                                              &\quad - \norm{\Exp \left[ \bc{E}^{CG} \right]}{A}^{2} \norm{\Exp \left[ \bc{E}^{S} \right]}{A}^{2\left(\nu_{1}+\nu_{2}\right)}. \label{eq:decomposition}
  \end{align}
  First, we estimate the terms involving only first moments of the fault matrices.
  Using that \(\b{E}^{CG}\) is an \(A\)-orthogonal projection and that the damped Jacobi smoothener is convergent, we find
  \begin{align*}
    \norm{\Exp \left[ \bc{E}^{CG} \right]}{A} & \leq \norm{\b{E}^{CG}}{A} + \left(1-\left(1-q\right)^{3}\right) \norm{\b{I}-\b{E}^{CG}}{A} \leq 1 + Cq,\\
    \norm{\Exp \left[ \bc{E}^{S} \right]}{A} & \leq \norm{\b{E}^{S}}{A} + q \norm{\b{I}-\b{E}^{S}}{A} \leq 1 + Cq,
  \end{align*}
  and therefore
  \begin{align*}
    \norm{\Exp \left[ \bc{E}^{TG} \right]}{A} & \leq \norm{\b{E}^{TG}}{A} +\norm{\Exp \left[ \bc{E}^{S} \right]}{A}^{\nu_{1}+\nu_{2}} \norm{\Exp \left[ \bc{E}^{CG} \right]}{A} - \norm{\b{E}^{S}}{A}^{\nu_{1}+\nu_{2}} \norm{\b{E}^{CG}}{A}\\
                                              &= \norm{\b{E}^{TG}}{A} + Cq.
  \end{align*}
  Next, we estimate all the terms \(\b{C}^{(\bullet)}\) involving variances of the fault matrices.
  We bound
  \begin{align*}
    \frac{1}{q}\norm{\b{C}^{(\rho)}}{A} & \leq \rho
                                          \left[ \left(\b{A}^{\frac{1}{2}}\b{P}\b{A}_{C}^{-1}\b{R}\right)^{\otimes2}\b{K}\b{A}^{\otimes2}\b{K}\left(\b{P}\b{A}_{C}^{-1}\b{R}\b{A}^{\frac{1}{2}}\right)^{\otimes2} \right]^{\frac{1}{2}} \\
                                        & = \rho
                                          \left[ \left(\b{P}\b{A}_{C}^{-1}\b{R}\b{A}\b{P}\b{A}_{C}^{-1}\b{R}\right)^{\otimes2}\b{K}\b{A}^{\otimes2}\b{K}\right]^{\frac{1}{2}} \\
                                        & =\rho
                                          \left[ \left(\b{P}\b{A}_{C}^{-1}\b{R}\right)^{\circ2}\b{A}^{\circ2}\right]^{\frac{1}{2}} \\
                                        & \leq \norm{\left(\b{P}\b{A}_{C}^{-1}\b{R}\right)^{\circ2}}{2}^{\frac{1}{2}}\norm{\b{A}^{\circ2}}{2}^{\frac{1}{2}}.
  \end{align*}
  Here, we used that \(\b{K}=\b{K}^{2}\) and that for compatible \(\b{Z}\)
  \begin{align*}
    \left(\b{K}\b{Z}^{\otimes2}\b{K}\right)_{in+p,jn+q} &= \b{K}_{in+p,in+p}\b{Z}_{ij}\b{Z}_{pq}\b{K}_{jn+q,jn+q}
                                                        =
                                                          \begin{cases}
                                                            \left(\b{Z}^{\circ 2}\right)_{ij} & \text{if } i=p,~j=q, \\
                                                            0 & \text{else}
                                                          \end{cases}
  \end{align*}
  for \(1\leq i,j,p,q\leq n\).
  Similarly, we find
  \begin{align*}
    \frac{1}{q}\norm{\b{C}^{(P)}}{A} & \leq \norm{\b{A}^{\circ2}}{2}^{\frac{1}{2}} \norm{\left(\b{P}\b{A}_{C}^{-1}\b{R}\right)^{\circ2}}{2}^{\frac{1}{2}}, \\
    \frac{1}{q}\norm{\b{C}^{(R)}}{A} & \leq \norm{\b{A}_{C}^{\circ2}}{2}^{\frac{1}{2}} \norm{\left(\b{A}_{C}^{-1}\right)^{\circ2}}{2}^{\frac{1}{2}}, \\
    \frac{1}{q^{2}}\norm{\b{C}^{(P,R)}}{A} & \leq \norm{\b{A}^{\circ2}}{2}^{\frac{1}{2}} \norm{\b{A}_{C}^{\circ2}}{2}^{\frac{1}{2}} \norm{\left(\b{A}_{C}^{-1}\b{R}\right)^{\circ2}}{2}, \\
    \frac{1}{q^{2}}\norm{\b{C}^{(P,\rho)}}{A} & \leq \norm{\b{A}^{\circ2}}{2} \norm{\left(\b{P}\b{A}_{C}^{-1}\b{R}\right)^{\circ2}}{2}, \\
    \frac{1}{q^{2}}\norm{\b{C}^{(R,\rho)}}{A} & \leq \norm{\b{P}^{\circ 2}}{2} \norm{\b{R}^{\circ 2}}{2} \norm{\b{A}^{\circ2}}{2}^{\frac{1}{2}} \norm{\left(\b{A}_{C}^{-1}\right)^{\circ2}}{2}^{\frac{1}{2}}, \\
    \frac{1}{q^{3}}\norm{\b{C}^{(P,R,\rho)}}{A} & \leq \norm{\b{P}^{\circ 2}}{2}^{2} \norm{\b{A}^{\circ2}}{2} \norm{\left(\b{A}_{C}^{-1}\b{R}\right)^{\circ2}}{2}, \\
    \frac{1}{q}\norm{\b{C}^{(S)}}{A} &\leq \norm{\left(\b{N}\b{A}\right)^{\circ2}}{2} .
  \end{align*}
  In the last inequality, we used that \(\b{V}^{(S)}\) and \(\b{N}^{\otimes2}\) commute.
  Using \Cref{lem:hadamardEst2,as:approximation,as:prolongation} we find
  \begin{align*}
    \norm{\b{P}^{\circ2}}{2} &\leq \norm{\b{P}}{2}^{2} \leq C, \\
    \norm{\b{A}^{\circ2}}{2} &\leq \norm{\b{A}}{2}^{2}, \\
    \norm{\b{A_{C}}^{\circ2}}{2} &\leq \norm{\b{A_{C}}}{2}^{2} \leq \norm{\b{A}}{2}^{2}, \\
    \norm{\left(\b{P}\b{A}_{C}^{-1}\b{R}\right)^{\circ2}}{2} & \leq \max_{i}\norm{\b{P}\b{A}_{C}^{-1}\b{R}\vec{e}^{\,(i)}}{2}^{2} \\
                             & \leq \norm{\b{P}\b{A}_{C}^{-1}\b{R}\b{A}}{2}^{2}\max_{i}\norm{\b{A}^{-1}\vec{e}^{\,(i)}}{2}^{2} \\
                             & \leq C \max_{i}\norm{\b{A}^{-1}\vec{e}^{\,(i)}}{2}^{2}, \\
    \norm{\left(\b{A}_{C}^{-1}\b{R}\right)^{\circ2}}{2}&\leq \max_{i}\norm{\b{P}\b{A}_{C}^{-1}\vec{e}_{C}^{\,(i)}}{2} \max_{j}\norm{\b{A}_{C}^{-1}\b{R}\vec{e}^{\,(j)}}{2} \\
                             &\leq C\max_{i}\norm{\b{A}_{C}^{-1}\vec{e}_{C}^{\,(i)}}{2} \norm{\b{A}_{C}^{-1}\b{R}\b{A}}{2}\max_{j}\norm{\b{A}^{-1}\vec{e}^{\,(j)}}{2}\\
                             &\leq C \max_{i}\norm{\b{A}_{C}^{-1}\vec{e}_{C}^{\,(i)}}{2}\max_{j}\norm{\b{A}^{-1}\vec{e}^{\,(j)}}{2}, \\
    \norm{\left(\b{N}\b{A}\right)^{\circ2}}{2}&\leq  \norm{\b{N}\b{A}}{2}^{2} \leq C.
  \end{align*}
  Now, because \(\b{A}\) is the finite element discretization of a second order PDE over a quasi-uniform mesh, we obtain (see Theorem 9.11 in \cite{ErnGuermond2004_TheoryPracticeFiniteElements}),
  \begin{align*}
    \norm{\b{A}}{2} & \leq C h^{d-2}.
  \end{align*}
  Since
  \begin{align*}
    \norm{\b{A}\vec{e}^{\,(i)}}{2} &\leq C h^{-\frac{d}{2}}\norm{u_{L}}{L^{2}},
  \end{align*}
  where \(u_{L}\in V_{L}\) is given by
  \begin{align*}
    a\left(u_{L},v\right)=v\left(\vec{x}_{i}\right), \quad \forall v\in V_{L},
  \end{align*}
  we can apply Lemma~\ref{lem:Ainvbound} to obtain bounds for \(\max_{j}\norm{\b{A}^{-1}\vec{e}^{\,(j)}}{2}\) (and equally for \(\max_{i}\norm{\b{A}_{C}^{-1}\vec{e}_{C}^{\,(i)}}{2}\)).
  \begin{align*}
    \frac{1}{q}\norm{\b{C}^{(P)}}{A},~\frac{1}{q}\norm{\b{C}^{(R)}}{A},~\frac{1}{q}\norm{\b{C}^{(\rho)}}{A},~ \frac{1}{q^{2}}\norm{\b{C}^{(R,\rho)}}{A} &\leq C \begin{cases}
      h^{\frac{d-4}{2}} & d<4, \\
      \sqrt{1+ \left|\log h\right|} & d=4, \\
      1 & d> 4,
    \end{cases}\\
    \frac{1}{q^{2}}\norm{\b{C}^{(P,R)}}{A},~\frac{1}{q^{2}}\norm{\b{C}^{(P,\rho)}}{A}, ~ \frac{1}{q^{3}}\norm{\b{C}^{(P,R,\rho)}}{A} & \leq C \begin{cases}
      h^{d-4} & d<4, \\
      \left(1+ \left|\log h\right|\right) & d=4, \\
      1 & d> 4,
    \end{cases}\\
    \frac{1}{q}\norm{\b{C}^{(S)}}{A} &\leq C.
  \end{align*}
  Therefore, we obtain from \cref{eq:decomposition}

  \begin{align*}
    \rho\left(\Exp \left[ \left(\bc{E}^{TG}\right)^{\otimes2} \right] \right) &\leq \norm{\b{E}^{TG}}{A}^{2} + C\begin{cases}
      q^{2}h^{d-4} & d<4, \\
      q^{2}\left(1+\left|\log h\right|\right) & d=4, \\
      q^{2} & d> 4,
    \end{cases}
  \end{align*}
  and hence
  \begin{align*}
    \varrho\left(\bc{E}^{TG}\right) & \leq \norm{\b{E}^{TG}}{A} +
                                      C\begin{cases}
                                        qn^{\frac{4-d}{2d}} & d < 4, \\
                                        q \sqrt{\log n} & d=4, \\
                                        q & d > 4,
                                      \end{cases}
  \end{align*}
  where we used that \(n\approx h^{-d}\).
\end{proof}

In order to conclude, we need the following technical estimate:
\begin{lemma}\label{lem:Ainvbound}
  Let \(\Omega\in C^{2}\) or \(\Omega\) a convex polyhedron and let \(u_{L}\in V_{L}\) be the unique solution of
  \begin{align*}
    a\left(u_{L},v\right)=v\left(\vec{x}_{i}\right), \quad \forall v\in V_{L}.
  \end{align*}
  Then
  \begin{align*}
    \norm{u_{L}}{L^{2}}\leq
    C\begin{cases}
      1 & d < 4, \\
      \left(1+\left|\log h\right|\right)^{\frac{1}{2}} & d=4, \\
      h^{2-\frac{d}{2}} & d> 4,
    \end{cases}
  \end{align*}
  where \(C\) is a constant independent of \(h\).

\end{lemma}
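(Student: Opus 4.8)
The plan is to realise $u_L$ as the finite element approximation of a genuine $H^2$ function obtained by smoothing the point source, and then to estimate the two resulting contributions separately. First I would introduce a \emph{regularised Dirac mass} $\delta_h\in V_L$, namely the $L^2$-Riesz representative of the functional $v\mapsto v(\vec{x}_i)$ on $V_L$, so that $(\delta_h,v)_{L^2}=v(\vec{x}_i)$ for all $v\in V_L$. Standard properties of the mass matrix on a quasi-uniform mesh give that $\delta_h$ is essentially concentrated in a ball of radius $Ch$ about $\vec{x}_i$, with $\norm{\delta_h}{L^1}\le C$, and, via the inverse inequality together with $\norm{\delta_h}{L^2}^2=\delta_h(\vec{x}_i)\le\norm{\delta_h}{L^\infty}\le Ch^{-d/2}\norm{\delta_h}{L^2}$, the bounds $\norm{\delta_h}{L^2}\le Ch^{-d/2}$ and more generally $\norm{\delta_h}{L^{p'}}\le Ch^{-d/p}$ for $1\le p'\le\infty$. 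Let $\tilde u\in V$ solve $a(\tilde u,v)=(\delta_h,v)_{L^2}$ for all $v\in V$. The key point of this construction is that $u_L$ is then \emph{exactly} the Ritz (Galerkin) projection of $\tilde u$ onto $V_L$: for $v\in V_L$ both $a(u_L,v)=v(\vec{x}_i)$ and $a(\tilde u,v)=(\delta_h,v)_{L^2}=v(\vec{x}_i)$, so $a(u_L-\tilde u,v)=0$.

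Next I would split $\norm{u_L}{L^2}\le\norm{\tilde u}{L^2}+\norm{u_L-\tilde u}{L^2}$ and treat the second term by the usual Aubin--Nitsche argument. Elliptic $H^2$-regularity on a convex polyhedron (or a $C^2$ domain) gives $\norm{\tilde u}{H^2}\le C\norm{\delta_h}{L^2}\le Ch^{-d/2}$, and then the standard $L^2$ finite element error estimate on a quasi-uniform mesh yields $\norm{u_L-\tilde u}{L^2}\le Ch^2\norm{\tilde u}{H^2}\le Ch^{2-d/2}$. This contribution is already of the size claimed in the lemma, and is bounded (hence harmless) whenever $d\le4$.

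It remains to estimate $\norm{\tilde u}{L^2}$, which I would do by duality against the adjoint problem. For $f\in L^2(\Omega)$ let $\phi_f\in V$ solve $a(\phi_f,v)=(f,v)_{L^2}$ for all $v\in V$; by symmetry of $a$ one has $(\tilde u,f)_{L^2}=a(\tilde u,\phi_f)=(\delta_h,\phi_f)_{L^2}$, so by H\"older and the $L^{p'}$ bound on $\delta_h$, $|(\delta_h,\phi_f)_{L^2}|\le Ch^{-d/p}\norm{\phi_f}{L^p(\Omega)}$ for every $p\in[1,\infty]$. Feeding in the Sobolev embedding of $H^2(\Omega)$ together with $\norm{\phi_f}{H^2}\le C\norm{f}{L^2}$: for $d<4$, $H^2\hookrightarrow L^\infty$ gives $\norm{\tilde u}{L^2}\le C$; for $d>4$, taking $p=2d/(d-4)$ and using $H^2\hookrightarrow L^{2d/(d-4)}$ gives $\norm{\tilde u}{L^2}\le Ch^{-(d-4)/2}=Ch^{2-d/2}$; and for the borderline $d=4$, using the Moser--Trudinger/Brezis--Gallouet-type bound $\norm{\phi_f}{L^p}\le C\sqrt{p}\,\norm{\phi_f}{H^2}$ and optimising $h^{-4/p}\sqrt{p}$ over $p$ (the minimum occurring near $p\sim|\log h|$) gives $\norm{\tilde u}{L^2}\le C(1+|\log h|)^{1/2}$. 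Adding the $Ch^{2-d/2}$ error term from the previous paragraph, and recalling that $h^{2-d/2}$ is bounded for $d\le4$, reproduces exactly the three cases of the lemma.

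The main obstacle I anticipate is the sharp treatment of the critical dimension $d=4$: the crude embedding $H^2\hookrightarrow L^p$ for all finite $p$ only delivers $\norm{\tilde u}{L^2}\le C|\log h|$, and pinning down the correct square-root power $(1+|\log h|)^{1/2}$ requires the precise $\sqrt{p}$ growth of the embedding constant. Away from $d=4$ the argument is routine once the regularised Dirac is in place; the remaining ingredients --- $H^2$ regularity on convex polyhedra, the $L^2$ finite element estimate, and the $L^1$/$L^{p'}$ bounds on $\delta_h$ --- are all standard. As an alternative to the duality bound on $\norm{\tilde u}{L^2}$, one may represent $\tilde u(x)=\int_\Omega G(x,y)\delta_h(y)\,dy$ through the continuous Green's function and invoke the classical pointwise estimate $|G(x,y)|\le C|x-y|^{2-d}$ (valid on convex polyhedra); evaluating $\int_{Ch}^{\operatorname{diam}\Omega}r^{3-d}\,dr$ and the contribution on $B_{Ch}(\vec{x}_i)$ then yields the same three regimes, at the cost of importing Green's-function bounds as a black box.
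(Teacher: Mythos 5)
Your proposal is correct, and its first half coincides with the paper's own argument: the paper likewise introduces the $L^2$-Riesz representative $f\in V_L$ of the point functional (via $\vec F=\b{M}^{-1}\vec e^{\,(i)}$ and the mass-matrix scaling $\b{M}\simeq h^{d}\b{I}$), observes that $u_L$ is the Galerkin approximation of the continuous solution $u$ with datum $f$, and applies Aubin--Nitsche to obtain $\norm{u_L-u}{L^2}\le Ch^{2}\norm{f}{L^2}$. Where you genuinely diverge is in bounding the continuous part: the paper uses a dual problem to show $\norm{u}{L^2}\le C\norm{f}{H^{-2}}$ and then imports the $L^2$- and $H^{-2}$-norm estimates for a nodal basis function from Theorem 4.8 of Ainsworth--McLean--Tran, which directly produce the three regimes in $d$; you instead estimate $(\delta_h,\phi_f)_{L^2}$ by H\"older against $L^{p'}$ bounds on the discrete delta combined with the Sobolev embedding $H^{2}\hookrightarrow L^{p}$, handling the critical case $d=4$ through the $\sqrt{p}$ growth of the embedding constant and optimizing $p\sim|\log h|$ (or, alternatively, through Green's function bounds $|G(x,y)|\le C|x-y|^{2-d}$). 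Both routes yield the same three regimes; the paper's is shorter but treats the negative-norm estimates as a black box, while yours is more self-contained at the price of needing (i) the concentration/decay of $\delta_h$ (exponential decay of $\b{M}^{-1}$ on quasi-uniform meshes) to justify the $L^{p'}$ bounds with $p'<2$, which is standard but should be cited or proved, and (ii) the sharp $\sqrt{p}$ constant in the critical embedding, which you correctly single out as the delicate point and which indeed delivers the $(1+|\log h|)^{1/2}$ factor claimed in the lemma.
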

\begin{proof}
  Let \(f\in V_{L}\) be the unique function that corresponds to the load and write \(f=\vec{F}\cdot \phi\) with \(\vec{F}\) its coefficient vector and \(\phi\) the vector of shape functions.
  Then
  \begin{align*}
    \vec{e}^{\,(i)} & =\left(\phi,f\right)_{L^{2}} = \left(\phi,\phi\right)_{L^{2}}\cdot \vec{F}  = \b{M}\vec{F}.
  \end{align*}
  Since by Theorem 9.8 in \cite{ErnGuermond2004_TheoryPracticeFiniteElements}
  \begin{align}
    C h^{d}\b{I} \leq \b{M} \leq C h^{d}\b{I},\label{eq:mass}
  \end{align}
  we have
  \begin{align}
    \norm{f}{H^{m}}\leq C h^{-d}\norm{\phi^{(i)}}{H^{m}}. \label{eq:20}
  \end{align}
  Now \(u_{L}\) is an approximation to \(u\in V\) that solves
  \begin{align*}
    a(u,v)= \left(f, v\right)_{L^{2}}, \quad \forall v\in V.
  \end{align*}
  Since \(f\in V\subset L^{2}\left(\Omega\right)\), we find by the Aubin-Nitsche Lemma \cite{ErnGuermond2004_TheoryPracticeFiniteElements} that
  \begin{align*}
    \norm{u_{L}-u}{L^{2}}\leq Ch^{2}\norm{f}{L^{2}}.
  \end{align*}
  Consider the solution \(u^{*}\in V\) to the dual problem
  \begin{align*}
    a(v,u^{*})= \left(u,v\right)_{L^{2}}, \quad \forall v\in V.
  \end{align*}
  By elliptic regularity \cite{ErnGuermond2004_TheoryPracticeFiniteElements}, we have
  \begin{align*}
    \norm{u^{*}}{H^{2}}\leq C\norm{u}{L^{2}}.
  \end{align*}
  Moreover,
  \begin{align*}
    \left(u,u\right)_{L^{2}} = a(u,u^{*}) = \left(f,u^{*}\right)_{L^{2}},
  \end{align*}
  so
  \begin{align*}
    \norm{u}{L^{2}}^{2} & \leq \norm{f}{H^{-2}}\norm{u^{*}}{H^{2}} \leq C\norm{f}{H^{-2}}\norm{u}{L^{2}},
  \end{align*}
  and hence \(\norm{u}{L^{2}}\leq C\norm{f}{H^{-2}}\).
  Therefore, by triangle inequality and \cref{eq:20}, we find
  \begin{align*}
    \norm{u_{L}}{L^{2}} &\leq Ch^{2}\norm{f}{L^{2}} + C\norm{f}{H^{-2}} \leq Ch^{-d}\left(h^{2}\norm{\phi^{(i)}}{L^{2}} + \norm{\phi^{(i)}}{H^{-2}}\right).
  \end{align*}
  Applying the estimates for \(\norm{\phi^{(i)}}{H^{m}}\) from Theorem 4.8 in \cite{AinsworthMcleanEtAl1999_ConditioningBoundaryElementEquations}, we obtain
  \begin{align*}
    \norm{u_{L}}{L^{2}}\leq
    C\begin{cases}
      1 & d < 4, \\
      \left(1+\left|\log h\right|\right)^{\frac{1}{2}} & d=4, \\
      h^{2-\frac{d}{2}} & d> 4.
    \end{cases}
  \end{align*}
\end{proof}

\section{Proof of \texorpdfstring{\Cref{thm:TGNoProlong}}{Theorem 4}}
\label{sec:proof-TGNoProlong}

\begin{proof}
  Adding and subtracting \(\Exp \left[ \bc{E}^{TG}\left(\nu_{1},\nu_{2}\right) \right]^{\otimes2}\) from \(\Exp \left[ \left(\bc{E}^{TG}\left(\nu_{1},\nu_{2}\right)\right)^{\otimes2} \right]\), we estimate in \(\norm{\bullet}{A^{2}}\)
  \begin{align*}
    &\rho\left(\Exp \left[ \left(\bc{E}^{TG}\left(\nu_{1},\nu_{2}\right)\right)^{\otimes2} \right]\right)
    \leq \norm{\Exp \left[ \left(\bc{E}^{TG}\left(\nu_{1},\nu_{2}\right)\right)^{\otimes2} \right]}{A^{2}} \\
    \leq&
    \begin{multlined}[t]
      \norm{\Exp \left[ \bc{E}^{TG}\left(\nu_{1},\nu_{2}\right) \right]}{A^{2}}^{2} + \left(\norm{\Exp \left[ \bc{E}^{CG} \right]}{A^{2}}^{2} + \norm{\b{C}^{(R)}}{A^{2}} + \norm{\b{C}^{(\rho)}}{A^{2}}
      +\norm{\b{C}^{(R,\rho)}}{A^{2}}\right)\\
     \times\left(\norm{\Exp \left[ \bc{E}^{S} \right]}{A^{2}}^{2}+\norm{\b{C}^{(S)}}{A^{2}}\right)^{\nu_{1}+\nu_{2}} \\
     - \norm{\Exp \left[ \bc{E}^{CG} \right]}{A^{2}}^{2} \norm{\Exp \left[ \bc{E}^{S} \right]}{A^{2}}^{2\left(\nu_{1}+\nu_{2}\right)}.
    \end{multlined}
  \end{align*}
  We then get by \Cref{as:faults,as:approximation,as:prolongation} and \(\left(e^{\bullet}\right)^{2}\leq 1+2C\varepsilon + C^{2}\varepsilon^{2}\leq 1 + C\varepsilon\) that
  \begin{align*}
    \norm{\b{C}^{(R)}}{A^{2}} & \leq \varepsilon \left(e^{(R)}\right)^{2}\norm{\b{A}_{C}^{-1}\b{R}\b{A}}{2}^{2}\norm{\b{R}}{2}^{2}\leq \varepsilon \left(e^{(R)}\right)^{2}\underline{C}_{p}^{2}\overline{C}_{p}^{2}C_{A}^{2}  \leq C\varepsilon, \\
    \norm{\b{C}^{(\rho)}}{A^{2}} & \leq \varepsilon \left(e^{(\rho)}\right)^{2}\norm{\b{P}\b{A}_{C}^{-1}\b{R}\b{A}}{2}^{2} \leq \varepsilon \left(e^{(\rho)}\right)^{2}C_{A}^{2} \leq C\varepsilon, \\
    \norm{\b{C}^{(R,\rho)}}{A^{2}} & \leq \varepsilon^{2} \norm{\b{A}_{C}^{-1}\b{R}\b{A}}{2}^{2}\norm{\b{R}}{2}^{2} \leq \varepsilon^{2}\underline{C}_{p}^{2}\overline{C}_{p}^{2}C_{A}^{2}  \leq C\varepsilon^{2}.
  \end{align*}
  We also estimate
  \begin{align*}
    \norm{\b{C}^{(S)}}{A^{2}} & = \norm{\b{A}^{\otimes2}\b{V}^{(S)}\b{N}^{\otimes2}}{2} = \norm{\left(\b{A}\b{N}\right)^{\otimes2}\b{V}^{(S)}}{2} \leq \varepsilon\norm{\b{A}\b{N}}{2}^{2} \leq C\varepsilon.
  \end{align*}
  Here, we used \Cref{as:faults,as:smoother} and that \(\b{N}\) and \(\bc{X}^{(S)}\) commute.
  Moreover,
  \begin{align*}
    \norm{\Exp \left[ \bc{E}^{CG} \right]}{A^{2}} & =\norm{\Exp \left[ \bc{E}^{CG} \right]^{T}}{2} = \norm{\Exp \left[ \bc{E}^{CG} \right]}{2} \\
                                                  & \leq \norm{\b{E}^{CG}}{2} +  \left| 1-e^{(R)}e^{(\rho)} \right| \norm{\b{I}- \b{E}^{CG} }{2} \\
                                                  & \leq C_{A}\left(1+ \left| 1-e^{(R)}e^{(\rho)} \right|\right) \leq C\left(1+\varepsilon\right), \\
    \norm{\Exp \left[ \bc{E}^{S} \right]}{A^{2}} & = \norm{\Exp \left[ \bc{E}^{S} \right]^{T}}{2}= \norm{\Exp \left[ \bc{E}^{S} \right]}{2} \\
                                                  & \leq \norm{\b{E}^{S}}{2} +  \left| 1-e^{(S)} \right| \norm{\b{I}- \b{E}^{S} }{2} \leq C(1+\varepsilon), \\
    \norm{\Exp \left[ \bc{E}^{TG}\left(\nu_{1},\nu_{2}\right) \right]}{A^{2}} & = \norm{\Exp \left[ \bc{E}^{TG}\left(\nu_{1},\nu_{2}\right) \right]^{T}}{2}= \norm{\Exp \left[ \bc{E}^{TG}\left(\nu_{2},\nu_{1}\right) \right]}{2} \\
                                                  & = \norm{\b{E}^{TG}\left(\nu_{2},\nu_{1}\right) }{2} + \norm{\Exp \left[ \bc{E}^{TG}\left(\nu_{2},\nu_{1}\right) \right]}{2} - \norm{\b{E}^{TG}\left(\nu_{2},\nu_{1}\right) }{2}\\
                                                  &\leq \norm{ \b{E}^{TG}\left(\nu_{2},\nu_{1}\right) }{2} + C\varepsilon.
  \end{align*}
  Here, we used that by \Cref{as:faults}
  \begin{align*}
    \left| 1-e^{(\bullet)} \right|  & \leq C\varepsilon, &
    \left(e^{(\bullet)}\right)^{2}      & \leq 1 + C\varepsilon, &
    \left| 1-e^{(R)}e^{(\rho)} \right| & \leq C\varepsilon.
  \end{align*}
  Collecting all the terms, we have
  \begin{align*}
    \rho\left(\Exp\left[\left(\bc{E}^{TG}\left(\nu_{1},\nu_{2}\right)\right)^{\otimes2}\right]\right)
    & \leq \norm{\b{E}^{TG}\left(\nu_{2},\nu_{1}\right)}{2}^{2} + C\varepsilon.
  \end{align*}
  so that we finally obtain
  \begin{align*}
    \varrho\left(\bc{E}^{TG}\left(\nu_{1},\nu_{2}\right)\right) & \leq \norm{\b{E}^{TG}\left(\nu_{2},\nu_{1}\right)}{2} + C\varepsilon
  \end{align*}

  We conclude by observing that the Lyapunov spectral radius, just as the ordinary spectral radius, is invariant with respect to cyclic permutations.
\end{proof}

\section*{Acknowledgements}
This research used resources of the Oak Ridge Leadership Computing Facility at the Oak Ridge National Laboratory, which is supported by the Office of Science of the U.S. Department of Energy under Contract No. DE-AC05-00OR22725.

\bibliography{/home/cag/org/papers}
\bibliographystyle{siamplain}
\end{document}